\numberwithin{equation}{section}
\newtheorem{theorem}{Theorem}[section]
\newtheorem{prop}[theorem]{Proposition}
\newtheorem{heur}[theorem]{Heuristic}
\theoremstyle{definition}
\newtheorem{definition} [theorem]{Definition}
\newtheorem{example} [theorem] {Example}
\newtheorem{remark} [theorem] {Remark}
\newif\ifshowvc
\newif\iflandscapetable
\DeclarePairedDelimiter{\set}{\{}{\}}
\newcommand{\sage}{\texttt{SAGE}\xspace}
\newcommand{\magma}{\texttt{Magma}\xspace}
\newcommand{\Q}{\mathbb{Q}}
\newcommand{\ZZ}{\mathbb{Z}}
\newcommand{\C}{\mathbb{C}}
\newcommand{\R}{\mathbb{R}}
\newcommand{\Z}{\mathbb{Z}}
\newcommand{\F}{\mathbb{F}}
\newcommand{\n}{\mathfrak{n}}
\newcommand{\fH}{\mathfrak{H}}
\newcommand{\PP}{\mathbb{P}}
\newcommand{\OF}{\mathcal{O}_F}
\newcommand{\OK}{\mathcal{O}_K}
\newcommand{\ap}{a_{\mathfrak{p}}}
\newcommand{\p}{\mathfrak{p}}
\newcommand{\N}{\mathfrak{N}}
\newcommand{\m}{\mathfrak{m}}
\renewcommand{\d}{\mathfrak{d}}
\newcommand{\q}{\mathfrak{q}}
\newcommand{\bG}{\mathbf{G}}
\newcommand{\D}{\mathcal{D}}
\DeclareMathOperator{\tors}{tors}
\DeclareMathOperator{\ord}{ord}
\DeclareMathOperator{\SL}{SL}
\DeclareMathOperator{\Orth}{O}
\DeclareMathOperator{\GL}{GL}
\DeclareMathOperator{\U}{U}
\DeclareMathOperator{\Gal}{Gal}
\DeclareMathOperator{\Res}{R}
\DeclareMathOperator{\res}{Res}
\DeclareMathOperator{\Red}{Red}%
\DeclareMathOperator{\End}{End}%
\DeclareMathOperator{\Norm}{N}
\def\BorelSerre{\text{BS}}
\def\Eis{\text{Eis}}
\renewcommand{\thefootnote}{\arabic{footnote}}
\begin{document}

\title[Table of elliptic curves]{A table of elliptic
curves over the cubic field of discriminant $-23$}

\author{Steve Donnelly}
\address{School of Mathematics and Statistics\\
University of Sydney\\
Sydney NSW 2006\\
AUSTRALIA }
\email{stephen.donnelly@sydney.edu.au}

\author{Paul E. Gunnells}
\address{Department of Mathematics and Statistics\\University of
Massachusetts\\Amherst, MA 01003\\
USA}
\email{gunnells@math.umass.edu}

\author{Ariah Klages-Mundt}
\address{Department of Mathematics\\ 
Amherst College\\Amherst, MA 01002\\
USA}
\email{aklagesmundt12@alumni.amherst.edu}

\author{Dan Yasaki}
\address{Department of Mathematics and Statistics\\ 
The University of North Carolina at Greensboro\\Greensboro, NC 27412\\
USA}
\email{d\_yasaki@uncg.edu}

\thanks{
PG wishes to thank the National Science Foundation for support of this
research through the NSF grant DMS-1101640.  AKM and PG both thank the
Amherst College Department of Mathematics for partial support.}

\renewcommand{\setminus}{\smallsetminus}

\date{\today}
\subjclass[2010]{Primary 11F75; Secondary 11F67, 11G05, 11Y99}

\begin{abstract}
Let $F$ be the cubic field of discriminant $-23$ and $\OF$ its ring of
integers.  Let $\Gamma$ be the arithmetic group $\GL_2 (\OF)$, and for
any ideal $\n \subset \OF$ let $\Gamma_{0} (\n)$ be the congruence
subgroup of level $\n$.  In \cite{gunnells-yasaki12}, two of us (PG
and DY) computed the cohomology of various $\Gamma_{0} (\n)$, along
with the action of the Hecke operators.  The goal of
\cite{gunnells-yasaki12} was to test the modularity of elliptic curves
over $F$.  In the present paper, we complement and extend the results
of \cite{gunnells-yasaki12} in two ways.  First, we tabulate more
elliptic curves than were found in \cite{gunnells-yasaki12} by using
various heuristics (``old and new'' cohomology classes, dimensions of
Eisenstein subspaces) to predict the existence of elliptic curves of
various conductors, and then by using more sophisticated search
techniques (for instance, torsion subgroups, twisting, and
the Cremona--Lingham algorithm) to find them.  We then compute
further invariants of these curves, such as their rank and
representatives of all isogeny classes.  Our enumeration includes
conjecturally the first elliptic curves of ranks $1$ and $2$ over this
field, which occur at levels of norm $719$ and $9173$ respectively.
\end{abstract}

\maketitle
\ifshowvc
\let\thefootnote\relax
\footnotetext{Base revision~\GITAbrHash, \GITAuthorDate,
\GITAuthorName.}
\fi

\section{Introduction}\label{intro}

\subsection{}
Let $F$ be the cubic field of discriminant $-23$ and let $\OF$ be its
ring of integers.  Let $\bG$ be the reductive $\Q$-group $\Res_{F/\Q}
(\GL_{2})$, let $\Gamma\subset \bG (\Q)$ be the arithmetic group
$\GL_2 (\OF)$, and for any ideal $\n \subset \OF$ let $\Gamma_{0}
(\n)$ be the congruence subgroup of level $\n$.  In
\cite{gunnells-yasaki12} two of us (PG and DY) investigated the
modularity of elliptic curves over $F$.  In particular, for all ideals
$\n$ of norm up to some bound, we computed the action of the Hecke
operators on the cohomology of the congruence subgroup $\Gamma_{0}
(\n) \subset\GL_{2} (\OF)$ and identified classes with integral
eigenvalues that are apparently attached to cuspidal automorphic forms
on $\GL_{2}/F$.  For each such class $\xi$, we found an elliptic curve
$E/F$ of conductor $\n$ such that $a_{\p} (E) = a_{\p} (\xi)$ for all
primes $\p \nmid \n$ that we could check.  Here $a_\p (\xi)$ denotes
the eigenvalue of the Hecke operator $T_{\p}$ on $\xi$, and $a_{\p}
(E)$ comes from counting the points on $E$ over the residue field
$\F_{\p} = \OF /\p$:
\[
a_{\p} (E) = \Norm (\p)+1-\# E (\F_\p).
\]   

\subsection{}
In this paper, we complement and extend the results of
\cite{gunnells-yasaki12} in two ways.  First, we investigate more fully
the elliptic curves found in \cite{gunnells-yasaki12} by computing
invariants such as their torsion subgroups and Mordell--Weil ranks.
We also find representatives of the different isomorphism classes of
curves within an isogeny class.  

Second, we extend our table of curves through a variety of heuristics
inspired by results in \cite{gunnells-yasaki12}.  For instance, we use
a heuristic of ``old and new'' cohomology classes and observations
about the dimensions of Eisenstein subspaces in cohomology to make
predictions about the dimensions of cuspidal subspaces.  For many
levels this prediction gives a one-dimensional cuspidal space, which
then gives a prediction for the existence of an elliptic curve.  In
all such cases our searches yielded an apparently unique isogeny class
of elliptic curves over $F$ of that conductor.  For other levels our
heuristics predict cuspidal subspaces of dimension $>1$.  For some of
these levels we found multiple isogeny classes of curves; for others
we find no elliptic curves. We remark that most of these computations
involve levels whose norms are far beyond those of levels where Hecke
operator computations as in \cite{gunnells-yasaki12} are feasible.
Thus we have no way of checking the ``modularity'' these curves, or
even that the cohomology classes themselves appear to be attached to
Galois representations.  Nevertheless, in our opinion the fact that
cohomology predicts the existence of these curves merely through
dimension counts is compelling.\footnote{We note that recent
remarkable work of P.~Scholze \cite{scholze} explains how to attach
Galois representations to Hecke eigenclasses in the mod $p$ and
characteristic $0$ cohomology of certain locally symmetric spaces.  At
present the example we consider falls outside the scope of this work,
since our field $F$ is neither totally real nor $CM$.}

Our paper fits into the long tradition of elliptic curve enumeration,
the modern era of which began with Cremona's extensive tables of
curves over $\Q$ \cite{cremona97} and imaginary quadratic fields
\cite{cremona.hyptess}.  Cremona's work has inspired many other
efforts, including further work over $\Q$ \cite{stein-watkins, bmsw},
as well as enumeration over $\Q (\sqrt{5})$ \cite{bdkm+12} and $\Q
(e^{2\pi i/5})$ \cite{ghy12}.

\subsection{} We now give an overview of the contents of this paper.
In Section~\ref{section 2} we recall the setup from
\cite{gunnells-yasaki12} and explain how we computed cohomology.  We
also describe the main heuristics that allow us to extend our
computations far beyond that of \cite{gunnells-yasaki12}.  In
Section~\ref{section 3} we present various methods for attempting to
find an elliptic curve over $F$ of a given conductor.  In
Section~\ref{section 4}, we address how to find all curves that are
isogenous to a given curve $E$ defined over $F$ via an isogeny defined
over $F$. In Section~\ref{section 5} we state our results and give
tables that summarize various information about our dataset of
elliptic curves.  Finally, in Appendix \ref{appendix} we give a small
table of elliptic curves over $F$ of conductor norm $<1187$, along
with some of their most important invariants; we believe this table
gives a complete enumeration of isomorphism classes up to this bound.
The full dataset we computed, which includes curves with conductors of
norm up to approximately 20000 (with fairly complete data for curves
of norm conductor less that 11575), is available online through the
\emph{$L$-functions and modular forms database} (\texttt{lmfdb.org}
\cite{lmfdb}).

\subsection{Acknowledgments} We thank Avner Ash, William Casselman,
Haluk \c{S}eng\"{u}n, and Mark Watkins for their interest in this
work.  PG and DY thank the American Institute of Mathematics and NSF
for support.

\section{Cohomological automorphic forms and further heuristics} \label{section 2}

\subsection{}
Throughout this paper we write $F$ for the cubic field $\Q[x]/
(x^3-x^2+1)$ of discriminant $-23$.  We let $a$ be a fixed root of
$x^3-x^2+1$.  The ring of integers $\OF$ is then $\Z [a]$, and the
unit group is generated by $-1$ and $a$.

In this section, we recall the setup of \cite{gunnells-yasaki12}.  As
above $\Gamma_{0} (\n)$ is a congruence subgroup of $\Gamma = \GL_{2}
(\OF)$.  Instead of trying to work directly with automorphic forms on
$\bG$, we compute the cohomology of $\Gamma_{0} (\n)$; by a theorem of
Franke \cite{franke} this allows us to work with certain automorphic
forms over $F$, including those that should be attached to elliptic
curves.   Let $C$ be the positivity domain of
positive-definite binary quadratic forms over $F$, as constructed by
Koecher (cf.\cite[\S 3]{gunnells-yasaki12} and \cite[\S 9]{koecher}).
The group $\Gamma$ acts on $C$, and induces an action on $C$ mod
homotheties, which can be identified with the global symmetric space
for $G = \bG (\R) \simeq \GL_{2} (\R)\times \GL_{2} (\C)$.  More
precisely, let $K \simeq \Orth(2) \times \U (2)$ be a maximal compact
subgroup of $G$ and let $A_{G}$ be the split component.  Then we have
an isomorphism
\begin{equation}\label{eq:symspace}
C/\R_{>0} \simeq G/A_{G}K \simeq \fH \times \fH_3 \times \R,
\end{equation}
where $\fH$ (respectively, $\fH_3$) is the hyperbolic plane (resp.,
hyperbolic $3$-space).  The explicit reduction theory due to Koecher
enables us to construct a $\Gamma$-equivariant decomposition of $C$
into polyhedral cones that induces a $\Gamma$-equivariant
decomposition of $C/\R_{>0}$ into cells.  The homology of the
associated chain complex over $\C $ mod $\Gamma_{0} (\n)$ can be
identified with $H^{*} (\Gamma_{0} (\n); \tilde{\Omega}_{\C} )$; here
$\tilde{\Omega}_{\C} $ is the system of local coefficients attached to
$\Omega \otimes \C$, where $\Omega$ is \emph{orientation module} of
$\Gamma$.\footnote{We take this time to point out an error in
\cite{gunnells-yasaki12}, in which we neglected to include the
orientation module in our coefficients.  None of the results there or
here are affected by this oversight.}

\subsection{} Over $F$, we have two sets of cohomological data on
automorphic forms.  First, we have computed the cohomology spaces
$H^{4} (\Gamma_{0} (\n); \tilde{\Omega}_{\C} )$ and Hecke operators on
levels up to norm $911$; we then expect the cuspidal eigenclasses with
rational eigenvalues to correspond to elliptic curves over
$F$. Second, for many levels of norm higher than $911$, including all
of the levels of norm less than or equal to $11569$, we have computed
the spaces $H^{4} (\Gamma_{0} (\n); \tilde{\Omega}_{\C} )$ but
\emph{no} Hecke operators.\footnote{The Hecke computations became
impractical at these levels because of our implementation.  With
better code we could undoubtedly treat some levels above norm $911$,
but even with this we do not expect to handle level norms above
$5000$.}  This means we cannot predict with certainly which ideals
should be conductors of elliptic curves.

Nevertheless, all is not lost.  To extend our computations beyond
level norm 911, we apply two heuristics derived from examination of
the Hecke data where we can compute Hecke operators.  The first
concerns the size of the Eisenstein subspace of the cohomology, and
the second concerns lifts of cohomology classes from lower levels to
higher.

\subsection{}
First, the Eisenstein cohomology is the cohomology that ``comes from
the boundary,'' and that should be eliminated from consideration when
one wants to predict the size of the cuspidal subspace.  For more
details about Eisenstein cohomology, we refer to \cite{harder.icm};
here we only recall the definition.  Let $X = G/A_{G}K$ be the global
symmetric space \eqref{eq:symspace}, and let $X^{\BorelSerre}$ be the
partial compactification constructed by Borel and Serre \cite{bs}.
The quotient $Y := \Gamma _{0} (\n )\backslash X$ is an orbifold, and
the quotient $Y^{\BorelSerre } := \Gamma_{0} (\n )\backslash
X^{\BorelSerre}$ is a compact orbifold with corners.  The local system
can be extended to the boundary, and we have
\[
H^{*} (\Gamma_{0} (\n); \tilde{\Omega}_{\C}) \simeq H^{*} (Y;
\tilde{\Omega}_{\C}) \simeq H^{*} (Y^{\BorelSerre};
\tilde{\Omega}_{\C}),
\]
where we have abused notation by denoting the original local system
and its extension by the same symbol.

Now let $\partial Y^{\BorelSerre} = Y^{\BorelSerre}\smallsetminus Y$.
The Hecke operators act on the cohomology of the boundary $H^{*}
(\partial Y^{\BorelSerre}; \tilde{\Omega}_{\C})$, and the inclusion of
the boundary $\iota \colon \partial Y^{\BorelSerre} \rightarrow
Y^{\BorelSerre}$ induces a map on cohomology $\iota^{*}\colon H^{*}
(Y^{\BorelSerre}; \tilde{\Omega}_{\C}) \rightarrow H^{*} (\partial
Y^{\BorelSerre}; \tilde{\Omega}_{\C})$ compatible with the Hecke
action.  The kernel $H^{*}_{!}  (Y^{\BorelSerre};
\tilde{\Omega}_{\C})$ of $\iota^{*}$ is called the \emph{interior
cohomology}; it equals the image of the cohomology with compact
supports.  The goal of Eisenstein cohomology is to use Eisenstein
series and cohomology classes on the boundary to construct a
Hecke-equivariant section $s\colon H^{*} (\partial Y^{\BorelSerre};
\C) \rightarrow H^{*} (Y^{\BorelSerre}; \C)$ mapping onto a complement
$H^{*}_{\Eis} (Y^{\BorelSerre}; \C )$ of the interior cohomology in
the full cohomology.  The image of $s$ is called the \emph{Eisenstein
cohomology}.  Computations from \cite{gunnells-yasaki12} suggest the
following:

\begin{heur}\label{conj:eis}
The Eisenstein subspace of $H^4(\Gamma_0(\n);\tilde{\Omega}_{\C})$ is
rank $2c(\n) - 1$, where $c(\n)$ is the number of $\Gamma_0(n)$-orbits
on $\PP^1(F)$.
\end{heur}

We remark that in principle we should be able to apply results of
Harder \cite{harder.gl2} to explicitly determine this subspace.
However, in practice it is easier to compute the Hecke operators on
$H^{4}$ and to determine how large the space is from the Hecke
eigenvalues (one looks for classes on which $T_{\p}$ acts with
eigenvalue $\Norm (\p) + 1$.)

\subsection{}
The second heuristic concerns how cuspidal eigenclasses at one level can
appear at another.  The data suggests that some of the same
considerations in the Atkin--Lehner theory of oldforms \cite{al} apply
in cohomology.  Recall that this theory is based on the observation
that if $f (z)$ is a holomorphic weight $k$ cuspform on $\Gamma_{0} (m)
\subset \SL_{2} (\Z)$, then $f (dz)$ is a holomorphic weight $k$
cuspform on $\Gamma_{0} (m')$ for any $m'$ divisible by $m$, where $d$
is any divisor of $m'/m$.  We observe the same in cohomology, which
leads to the following prediction:

\begin{heur}\label{conj:oldform}
Let $\xi$ be a cuspidal Hecke eigenclass at level $\n \subset \OF$,
and let $\N \subset \OF$ be divisible by $\n$. Then for every proper,
nontrivial divisor $\d$ of $\N/\n$, there is a cuspidal Hecke
eigenclass $\xi_{\d}$ in the cohomology at level $\N$ whose
eigenvalues agree with those of $\xi$ for $T_{\p}$ with $\p \nmid \N$.
Moreover, the classes $\xi_{\d}$ are linearly independent in
cohomology.
\end{heur}

We remark that this heuristic should follow from Casselman's
generalization of Atkin--Lehner theory to automorphic representations
of $\GL_{2}$ \cite{casselman-restriction, casselman-atkin-lehner}.
However, we have not checked the details of this computation.

\begin{example}
Let $\p_{5}$, $\p_{7}$, and $\p_{37}$ denote the degree 1 primes above
$5$, $7$, and $37$, respectively, and let $\N = \p_5 \p_{7} \p_{37}$.
The cohomology $H^4(\Gamma_0(\N);\tilde{\Omega}_{\C})$ is
$19$-dimensional.  Since $F$ has class number one, \cite[Theorem
7]{cremona-aranes} implies that the number of boundary components in
the Borel--Serre is 
\[c(\N) = \sum_{\d \mid \N} \phi_u(\d + \N \d^{-1}),\]
where 
\[ \phi_u(\m) = \#((\OF/\m)^\times/\OF^\times).\]
We compute that $\phi_u(\d + \N \d^{-1}) = 1$ for each of the $8$
divisors of $\N$, and so $c(\N) = 8$.  Thus the expected cuspidal
cohomology in $H^4(\Gamma_0(\N);\tilde{\Omega}_{\C})$ is
$4$-dimensional.  At level $\n = \p_5\p_7$ we find a $1$-dimensional
cuspidal cohomology space and an elliptic curve of conductor $\n$ to
account for it.  Since $\N / \n = \p_{37}$ has two proper nontrivial
divisors, Heuristic~\ref{conj:oldform} tells us that we should expect
a 2-dimensional contribution to the cohomology at level $\N$.
Similarly, the same happens at level $\n' = \p_5\p_{37}$ which again
produces a 2-dimensional contribution to the cohomology at level $\N$.
Therefore we expect (i) all the cuspidal eigenclasses at level $\N$
are accounted for by cohomology for the levels $\n , \n'$, and (ii) no
other levels dividing $\N$ should have cuspidal cohomology.  We find
that this is true, and thus do not expect to find any elliptic curves
over $F$ of conductor $\N$.  Indeed, applying the techniques in
Section~\ref{section 3} produced no curves over $F$ of this conductor.
\end{example}

\section{Strategies to find an elliptic curve} \label{section 3}

\subsection{}
In this section, we describe various strategies for finding an
elliptic curve $E$ over $F$; some of these are described in
\cite{bdkm+12} (for $F=\Q (\sqrt{5})$).  There are different
strategies to employ, depending on how much information one has about
$E$.  At the very least, one begins with an ideal $\n \subset \OF$
that one hopes is the conductor of an elliptic curve.  If one is
lucky, one has a list of the Hecke eigenvalues $a_{\p}$ for a range of
primes $\p$ that are supposed to match the point counts of $E (\OF
/\p)$; such data opens the door to other techniques.  However, it
should be emphasized that, unlike the case of elliptic curves over
$\Q$, even if one has complete explicit information about the
automorphic form $f$ on $\GL_{2}/F$ conjecturally attached to $E$,
there is no direct way to construct an elliptic curve $E_{f}$ with
matching $L$-function.  In other words, there is no known way to
produce the period lattice $\Lambda \subset \C$ such that $E_{f}
\simeq \C /\Lambda$.  (For a discussion of these issues over real
quadratic fields see \cite{lassina}).

\subsection{Naive Enumeration}

The most naive strategy is to systematically loop through Weierstrass
equations
\begin{equation}\label{eq:weier}
E \colon y^2 + a_1 xy + a_3 = x^3 + a_2 x^{2} + a_4 x + a_6, 
\end{equation}
with $a_1, a_2, a_3, a_4, a_6 \in \OF$ contained in some bounded
subset of $\OF $.  For each elliptic curve $E$, we can compute the
conductor $\n_{E}$ to see if it matches the prediction from
cohomology.  If we have Hecke data, we can then check if it seems to
agree with $E$.

This describes an algorithm that in principle will find all elliptic
curves over $F$; however, it is of course of no use as soon as the
curve with smallest Weierstrass coefficients in the target isogeny
class has large coefficients in any equation. For example, enumerating
all integral Weierstrass equations with two-digit coefficients over a
cubic number field requires on the order of $200^{18}$ computations,
which is infeasible. Most of the curves in our dataset could not be
found with this technique.  If one knows some $a_{\p}$s, then gains
can be made by sieving equations using congruence conditions imposed
on the coefficients; still this is too inefficient to find curves with
large Weierstrass coefficients.

\subsection{Torsion families}

We can refine the naive search in some cases if we can guess the
torsion subgroup structure of $E_f$. If the torsion subgroup of $E_f$
is one of the groups mentioned in Mazur's theorem or contains such a
subgroup, we can use the parametrizations of \cite{kubert76} to
significantly reduce our search area.

We use the following proposition to determine in which family to
search:
\begin{prop}\label{prop:1}
Let $\ell$ be a prime in $\Z$, and $E/F$ an elliptic curve. Then $\ell
\mid \#E'(F)_{\tors}$ for some curve $E'$ in the $F$-isogeny class of
$E$ if and only if for all odd primes $\p$ at which $E$ has good
reduction $\ell \mid \Norm(\p) + 1 - \ap$.
\end{prop}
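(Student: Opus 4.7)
The plan is to treat the two directions with entirely different tools: reduction of torsion for the forward direction, and the mod-$\ell$ Galois representation plus Chebotarev density for the reverse. For the forward direction, fix $E'$ in the $F$-isogeny class of $E$ with a point $P \in E'(F)$ of order $\ell$. Since $F$-isogenous elliptic curves share the same $L$-function, $a_\p(E) = a_\p(E')$ at every prime of good reduction. At an odd prime $\p$ of good reduction not dividing $\ell$, the reduction map is injective on torsion of order prime to the residue characteristic (Silverman, \emph{AEC}, Prop.~VII.3.1), so the reduction of $P$ still has order $\ell$ in $E'(\F_\p)$, forcing $\ell \mid \#E'(\F_\p) = \Norm(\p) + 1 - a_\p$. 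The finitely many odd primes above $\ell$ can be handled by a short ordinary/supersingular case check, or, more cleanly, absorbed into the density-$1$ hypothesis of Katz's theorem cited below.

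For the reverse direction I would pass to the mod-$\ell$ Galois representation
\[
\rho_\ell \colon \Gal(\overline F / F) \longrightarrow \GL(E[\ell]) \cong \GL_2(\F_\ell).
\]
For $\p \nmid \ell$ of good reduction, the characteristic polynomial of $\rho_\ell(\mathrm{Frob}_\p)$ reduces to $X^2 - a_\p X + \Norm(\p) \pmod{\ell}$, whose value at $X=1$ is $\#E(\F_\p) \pmod{\ell}$. The hypothesis therefore says that $1$ is an eigenvalue of $\rho_\ell(\mathrm{Frob}_\p)$ for every such $\p$, and by Chebotarev density the same holds for every $\sigma \in \Gal(\overline F / F)$. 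Since the two eigenvalues of $\rho_\ell(\sigma)$ multiply to $\det \rho_\ell(\sigma) = \chi_{\mathrm{cyc}}(\sigma)$ and one of them is $1$, we have $\mathrm{tr}\,\rho_\ell(\sigma) = 1 + \chi_{\mathrm{cyc}}(\sigma)$. Hence $\rho_\ell$ and $\mathbf{1} \oplus (\chi_{\mathrm{cyc}} \bmod \ell)$ have matching characteristic polynomials, and by Brauer--Nesbitt isomorphic semisimplifications, so $\rho_\ell$ is reducible with a Galois-stable line $L \subset E[\ell]$ whose character is either trivial (Case~A) or cyclotomic (Case~B).

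It remains to convert this reducibility into a rational $\ell$-torsion point on an isogenous curve. In Case~A, $L$ itself gives an $F$-rational point of order $\ell$ on $E$, and one takes $E' = E$. In Case~B, $L$ defines an $F$-rational cyclic subgroup $C \subset E$ of order $\ell$; setting $E' = E/C$ with isogeny $\phi \colon E \to E'$, the image $\phi(E[\ell]) \subset E'[\ell]$ is a Galois submodule isomorphic to $E[\ell]/C$, which carries the trivial character, so it supplies the desired $F$-rational point of order $\ell$ on $E'$. The step I expect to require the most care is the final one---passing from ``reducible mod-$\ell$ Galois representation'' to ``rational point on an $F$-isogenous curve'' when $\rho_\ell$ is non-semisimple in residue characteristic $\ell$; the cleanest way to sidestep such case analysis is to invoke Katz's theorem on torsion points of abelian varieties (\emph{Invent.\ Math.}\ \textbf{62} (1981), 481--502), which packages exactly the implication we need.
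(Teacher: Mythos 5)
Your proposal is correct, and your forward direction is exactly the paper's: injectivity of reduction on torsion at primes of good reduction gives $\ell \mid \#E'(\F_\p) = \Norm(\p)+1-a_\p$ (you are in fact more careful than the paper about the primes above $\ell$, which is harmless here since the only ramified prime of $F$ is $23$, so $e(\p/p) < p-1$ for every odd $p$). Where you diverge is the converse: the paper does not prove it at all, but simply cites Katz (\emph{Invent.\ Math.}\ \textbf{62} (1981)), whereas you sketch the actual argument --- Chebotarev forces $1$ to be an eigenvalue of every $\rho_\ell(\sigma)$, Brauer--Nesbitt identifies the semisimplification with $\mathbf{1}\oplus\chi_{\mathrm{cyc}}$, and the stable line is then either trivial (rational $\ell$-torsion on $E$ itself) or cyclotomic (quotient by it to get rational $\ell$-torsion on $E/L$). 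That sketch is in fact already a complete proof for a single prime $\ell$ on an elliptic curve: your worry about the non-semisimple case is unfounded, since reducibility alone hands you a stable line carrying one of the two Jordan--H\"older characters, and your Case A/Case B dichotomy covers both. The genuine difficulties that make Katz's theorem nontrivial arise only for prime \emph{powers} $\ell^n$ and for abelian varieties of dimension $>1$, neither of which is needed for this proposition. So your route buys a self-contained proof where the paper settles for a citation, at the cost of a page of standard representation-theoretic bookkeeping; deferring to Katz at the end, as both you and the paper do, is a legitimate shortcut rather than a patch over a gap.
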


\begin{proof}
One direction is easy.  Suppose $\ell \mid \#E'(F)_{\tors}$. Then by
the injectivity of the reduction map at primes of good reduction,
$\ell \mid \#{E}'(\OF/\p) = \Norm(\p) + 1 - \ap$. For the more
difficult converse, see \cite{katz81}.
\end{proof}

We can determine whether a curve in the isogeny class of $E_f$ likely
contains a $F$-rational $\ell$-torsion point by applying Proposition
\ref{prop:1} for all $\ap$ up to some bound on $\p$. If this is the
case, then we can search over the families of curves with
$\ell$-torsion for a curve in the isogeny class of $E_f$. Within a
relatively small search space, we can find many curves with large
coefficients much more quickly than with the naive search. For
example, we found the curve
\begin{multline*}
y^2 + a^2xy + a^2y 
= x^3 + (a+1)x^2 + (-200a^2+56a+5)x - 739a^2 + 41a + 1139
\end{multline*}
with conductor $(a^2 - 9)$ of norm $665$ and the curve
\begin{multline*}
y^2 + (a^2+1)xy + ay \\
= x^3 + (-a^2+a+1)x^2+ (-249910a^2+438560a-331055)x \\
+ 86253321a^2-151364024a+114261323
\end{multline*}
with conductor $(3a^2 -14a +1)$ of norm $2065$ by searching for curves
with $F$-rational $6$-torsion.\footnote{The given equation of the
second curve is the canonical model, which is a global minimal
model. The curve actually found using this method had the
coefficients $[a_{1},a_{2},a_{3},a_{4},a_{6}]=[16a^2 + 24a + 10, -1872a^2 -
152a + 952, -1872a^2 - 152a + 952, 0, 0]$.}

\subsection{Twisting}

Recall that \emph{quadratic twist} $E'/F$ of an elliptic curve $E/F$
is a curve that is isomorphic to $E$ over a degree 2 extension of
$F$. If we know an elliptic curve $E/F$ of some conductor, we can
compute quadratic twists to generate more curves over $F$,  and under
favorable conditions have information about the conductors of the
twists.  To make this precise, suppose the $j$-invariant $j (E)$ does
not equal $0, 1728$.  If $E$ has Weierstrass equation 
\[
E\colon y^{2} = x^{3} + \alpha x + \beta , \quad \alpha ,\beta \in F,
\]
then for $d\in \OF$ we define the $d$-twist $E^{d}$ by 
\begin{equation}
E^{d} \colon dy^2 = x^3 + ax + b
\end{equation}
We have the following well known proposition (for a proof, see
\cite{bdkm+12}):

\begin{prop}\label{prop:ariah.prop}
Let $E/F$ be an elliptic curve with $j \neq 0,1728$. If $\n$ is the
conductor of $E$ and the ideal generated by $d \in \OF$ is non-zero,
square-free, and coprime to $\n$, then the conductor of $E^{d}$ is
divisible by $d^2\n$.
\end{prop}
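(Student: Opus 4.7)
The plan is to verify the divisibility locally at each prime $\p$ of $\OF$, since $\n_{E^d} = \prod_\p \p^{f_\p(E^d)}$ where $f_\p$ denotes the local conductor exponent. Writing $v_\p$ for the $\p$-adic valuation, it suffices to show $v_\p(d^2\n) \leq f_\p(E^d)$ for every $\p$. The hypothesis $j(E) \neq 0, 1728$ ensures that $E$ and $E^d$ become isomorphic exactly over $F(\sqrt{d})$, so that $E^d$ is a genuine quadratic twist with no extra automorphisms to worry about.

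If $\p$ divides neither $d$ nor $\n$, both sides vanish. If $\p \mid \n$, then by hypothesis $\p \nmid d$, so $d$ is a $\p$-adic unit and $E^d$ is an \emph{unramified} quadratic twist of $E$ over $F_\p$: either $d$ is a square in $F_\p^\times$, in which case $E^d \simeq E$ over $F_\p$, or $F_\p(\sqrt{d})$ is the unramified quadratic extension of $F_\p$. In either case the inertia representations of $E$ and $E^d$ at $\p$ agree, so $f_\p(E^d) = f_\p(E) = v_\p(\n)$, and the required bound holds with equality.

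Finally, suppose $\p \mid d$: then $v_\p(d) = 1$ since $(d)$ is squarefree, and $\p \nmid \n$, so $E$ has good reduction at $\p$. Now $F_\p(\sqrt{d})$ is a \emph{ramified} quadratic extension of $F_\p$, and the associated twist character $\chi_d$ is ramified. By the N\'eron--Ogg--Shafarevich criterion, $E^d$ has good reduction at $\p$ if and only if the inertia action on its Tate module is trivial; but this action is obtained by tensoring the (trivial) inertia action of $E$ with $\chi_d$, and so is nontrivial. Nor can $E^d$ have multiplicative reduction, since $\chi_d$ takes values in $\{\pm 1\}$ and cannot produce a nontrivial unipotent inertia image. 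Hence $E^d$ has \emph{additive} reduction at $\p$, forcing $f_\p(E^d) \geq 2 = v_\p(d^2\n)$.

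Combining the three cases yields $d^2\n \mid \n_{E^d}$. The subtle point is the last case when $\p$ lies above $2$: the local theory of quadratic twists in residue characteristic $2$ is delicate and the conductor exponent can pick up wild ramification beyond $2$. However, the only bound we need is $f_\p(E^d) \geq 2$, which follows from additive reduction alone in any residue characteristic, so the argument goes through uniformly. I would expect to invoke the standard local analysis of conductors of twists (as in Silverman's \emph{Advanced Topics in the Arithmetic of Elliptic Curves}) for the facts used in the second and third cases.
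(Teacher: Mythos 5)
The paper does not actually prove this proposition; it simply cites \cite{bdkm+12}, so your local-conductor analysis is being compared with the standard argument rather than with anything in the text. Your decomposition into the three cases $\p \nmid d\n$, $\p \mid \n$, and $\p \mid d$ is the right skeleton, and your treatment of the case $\p \mid d$ is correct: since $(d)$ is squarefree, $v_\p(d)=1$ is odd, so $F_\p(\sqrt{d})/F_\p$ is ramified in \emph{every} residue characteristic, the inertia action on the Tate module of $E^d$ is $\chi_d\oplus\chi_d$ restricted to inertia, hence nontrivial and semisimple, giving additive reduction and $f_\p(E^d)\ge 2$. (In the first case you claim both sides vanish; the right-hand side need not vanish at $\p\mid 2$, but only the inequality $f_\p(E^d)\ge 0$ is needed there, so this overstatement is harmless.)

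The genuine gap is in your second case, at primes $\p$ above $2$. The assertion that ``$d$ is a $\p$-adic unit, hence $E^d$ is an unramified quadratic twist of $E$ over $F_\p$'' is false in residue characteristic $2$: a unit need not be a square times an unramified element, e.g.\ $\Q_2(\sqrt{-1})/\Q_2$ and $\Q_2(\sqrt{3})/\Q_2$ are ramified quadratic extensions generated by square roots of units. So when $\p \mid 2$ and $\p \mid \n$ (the coprimality hypothesis forces $\p\nmid d$ there), the local twist character can be ramified, the inertia representations of $E$ and $E^d$ need not agree, and the conductor exponent need not be preserved --- it can even \emph{decrease}, for instance when $E$ is itself a ramified quadratic twist at $\p$ of a curve with smaller conductor exponent and $d$ undoes that twist locally. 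This is exactly the delicate point at $2$ that you flagged, but you addressed it only in the third case (where it is indeed harmless); the danger sits in the second case, and your argument as written does not establish $f_\p(E^d)\ge v_\p(\n)$ there. To close the gap you must either impose that $(d)$ is coprime to $2\n$ (or that $d$ is a square in $F_\p^\times$ for $\p\mid 2$), or give a separate argument bounding $f_\p(E^d)$ from below at primes above $2$ dividing $\n$.
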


Given $E/F$, we can use Proposition~\ref{prop:ariah.prop} to find the
finite set of all $d$ such that $E^d$ might have norm conductor less
than a given bound. We can then compute the quadratic twists by these
$d$ to find curves that may otherwise be difficult to find. For
example, we found the curve 
\begin{multline*}
y^2 + (a+1)xy + (a^2+a+1)y \\
= x^3 + (-a^2-a)x^2 + (-43a^2+63a-69)x -
198a^2 + 335a - 288
\end{multline*}
with conductor $(14a - 3)$ and norm conductor $2645$ using this
method.  This curve is a quadratic twist of $y^2 + axy + ay = x^3 +
(a+1)x^2 + (6a-5)x + 4a^2-7a+2$, which was found by searching over
torsion families.  Another example is
\begin{multline*}
y^2 + (a^2+a)xy + a^2y \\
= x^3 + (-a^2-a)x^2 + (-212a^2+305a-181)x -
1422a^2 + 2466a - 2087
\end{multline*}
with conductor $(-15a^2 + 8a - 1)$ and norm conductor $3025$.  This is
a quadratic twist of $y^2 + (4a^2+3a+1)xy + (4a^2+3a)y = x^3
+ (4a^2+3a)x^2$, which was again found by searching over torsion
families.


\subsection{{Curves with prescribed good reduction}}
\label{cremona-lingham}

We also employ an algorithm due to Cremona--Lingham
\cite{cremona-lingham07}, which finds all elliptic curves with good
reduction at primes outside of a finite set $\mathcal{S}$ of primes in
a number field. A \magma \cite{magma} implementation of this algorithm
was provided by Cremona.  The algorithm has the advantage that it
allows targeting a specific conductor.  The drawback is that it can be
difficult to use in practice, since a key step involves finding
$\mathcal{S}$-integral points on elliptic curves.

\begin{definition}
The \emph{$m$-Selmer groups} $F(\mathcal{S},m)$ of $F^*$ are defined to be
\begin{equation*}
F(\mathcal{S},m) = \{x \in F^*/(F^*)^m \mid \ord_\p(x) \equiv 0
\bmod{m} \ \text{for all} \ \p \notin \mathcal{S}\}, 
\end{equation*}
where $F^*$ is the multiplicative group of $F$.
\end{definition}

\begin{definition}
$F(\mathcal{S},m)_{mn}$ is defined to be the image of the natural map
\[F(\mathcal{S},mn) \rightarrow F(\mathcal{S},m).\]
\end{definition}

The Cremona--Lingham algorithm computes the finite $m$-Selmer groups
$F(\mathcal{S},m)$ of $F^*$ for $m = 2$, $3$, $4$, $6$, and $12$.
>From these it computes a finite set of possible $j$-invariants such
that each elliptic curve with good reduction outside $\mathcal{S}$ has
$j$-invariant in this set. These $j$-invariants are either $j=0$ or
$1728$, cases which can be treated directly, or $\mathcal{S}$-integers
in $F$ satisfying
\begin{equation*}
w \equiv j^2(j-1728)^3 \bmod{F^{*6}} \quad \text{for $w \in F(\mathcal{S},6)_{12}$.} 
\end{equation*}
In the latter case $j$ is of the form $j = x^3/w = 1728 + y^2/w$,
where $(x,y)$ is a $\mathcal{S}$-integral point on the elliptic curve
$E_w: Y^2 = X^3 - 1728w$, of which there are finitely many by Siegel's
Theorem. From this set of $j$-invariants, we construct each curve with
the desired reduction properties (indeed, there are finitely many by
Shafarevich's Theorem): for each $j = x^3/w$ (excluding $j = 0, 1728$,
which are treated separately), we choose $u_0 \in F^*$ such that
$(3u_0)^6w \in F(\mathcal{S},12)$, and each curve is either of the
form $E: Y^2 = X^3 - 3xu_0^2X - 2yu_0^3$ or is a quadratic twist
$E^{(u)}$ for some $u \in F(\mathcal{S},2)$. We must also check that
each curve found has good reduction at the primes above 2 and 3 (if
these primes are not in $\mathcal{S}$).

The advantage of this approach is that it not only gives a way to
find curves of given conductor, but also to prove there are no others.  
The disadvantage is that it is usually feasible to carry this through
only for the smallest fields and conductors; the conductors in this paper
are already too big.  This is because, first of all, a large number of curves
$E_w$ must be considered individually.  Worse still, for many $w$ it is too 
hard to determine the set of all $\mathcal{S}$-integral points on $E_w$ over $F$.
The general method currently used for this involves first determining all
{\it rational} points, i.e.~determining the Mordell--Weil group $E_w(F)$.
This is inherently very difficult.  In particular, many of the groups $E_w(F)$ 
have rank $1$ and are generated by a point of huge height (as predicted by the
conjecture of Birch and Swinnerton-Dyer), and these generators are impossible
to find with current techniques for curves over number fields.
Ironically, in these hard cases there are never any $\mathcal{S}$-integral
points in $E_w(F)$, because those points won't have huge height.  So these
hard cases are of no interest to us, but we can't prove it without knowing
the generators!  

Despite these difficulties, we used the
Cremona--Lingham algorithm to find many curves with large coefficients,
curves that would have been virtually impossible to find by the previous methods.
Our implementations do not attempt to find all rational or $\mathcal{S}$-integral
points but simply search, in natural search regions, for points in $E_w(F)$.
For example, a search on $E_w$ found the following curve defined
over $F$ with $\n = (a^2 - 10a + 1)$ and norm conductor $865$ using
this method, which lies just outside the range of curves found in \cite{gunnells-yasaki12}:
\begin{equation*}
y^2 + axy + (a^2+1)y = x^3 + (-a^2-1)x^2 + (-48a^2+85a-63)x - 211a^2 + 368a - 277.
\end{equation*}

\subsection{A well-optimized search algorithm}
\label{elliptic-curve-search}

This section describes a more sophisticated algorithmic approach to using
the ideas of the Cremona--Lingham method of the previous section.  Again,
we abandon the goal of proving completeness: our primary goal is to find all
curves that actually exist.  (Naturally one also wishes to prove non-existence
of other curves, but this is simply too hard a problem with current algorithms.)
Having adopted this attitude, in dealing with the large number of candidate
curves $E_w$ we are free to focus effort wherever we choose, and to switch
between the candidates at will.
Furthermore, we bring to bear some powerful techniques for searching for
points on candidate curves.  We have a two-pronged approach: the two main
techniques described below complement each other to some extent (a point that
is hard to find for one of them is not necessarily so hard for the other).

The program that performs all this is carefully written so as to
minimize the effort required, starting with very quick searches on
each candidate and gradually increasing the effort.  It balances the
running times of the different techniques, and focuses more effort on
``more likely'' candidates according to some theoretical heuristics.
This program is implemented for general number fields, and is included
in the Magma computational algebra system: the function is called
{\tt EllipticCurveSearch}.

The first main technique is a direct search for points on $E_w$ which
targets points especially likely to be of interest.  This is based on
a heuristic idea due to Elkies:
if an elliptic curve has discriminant $d$ and invariants $c_4, c_6$,
then it is likely that for each archimedian absolute value $v$,
$|c_4^3|_v, |c_6^2|_v$ and $|1728d|_v$ are all of roughly the same size.
(If not, then there is a lot of cancellation in $c_4^3 - c_6^2 = 1728d$,
and one expects this to occur not so frequently).  Therefore we search for
points on $E_w : y^2 = x^3 - 1728d$ by running over small values for $x$
under a weighted norm that is determined by $d$.  We also put in some
non-archimedian information about $c_4$, so the search spaces consist
of all $x \in F$ in the intersection of some $\ZZ $-module with some ``box.''

The second main technique is a tuned version of the generic approach to
determining generators for the Mordell--Weil group of an elliptic curve
over a number field, using the method of two-descent.
Two-descent helps in two ways.  First of all, it gives an upper bound
on the rank of the Mordell--Weil group.  In particular, when the bound
is zero, or equals the rank of the group generated by points already
found, we are done with $E_w$.  Two-descent also gives a finite set of
``two-covering curves'' $C$ with covering maps to $C \to E_w$, such that
every point in $E_w(F)$ is the image of an $F$-rational point on some $C$.
The advantage is that such a point has smaller height than its image on
$E_w$, {\it if} one uses ``nice'' (i.e.~minimized and reduced) models of
the two-coverings.  An algorithm for minimizing and reducing two-coverings
over number fields is due to one of us (SD) and Fisher.  Additionally, many
two-coverings that have no rational points can be ruled out by computing
Cassels--Tate pairings; an algorithm for this is due to one of us (SD).

All the above-mentioned algorithms have good implementations in \texttt{Magma}, so 
are available for use in our search for elliptic curves of given conductor.
We explain how the search program works by describing what happens for some
particular levels.

For level $\n  = (9a^{2}-a-15)$ of norm $2879$, the space of forms has
dimension $2$, and there are two isogeny classes of elliptic curves.
The search program has to individually consider 144 candidate curves
$E_w$.  We give details about the two values of $w$ which yield the
two curves.

For $w = a^2 - 24a - 17$, $E_w$ has Mordell--Weil rank $3$.  Quick searches on $E_w$ find two
independent points; integral points in this rank $2$ subgroup yield three elliptic curves,
but none of conductor $N$.  Using two-descent, a third independent point is quickly found
(on the first two-cover chosen).  Integral points in the full rank $3$ group yield three
more elliptic curves, including the curve with conductor $N$ and discriminant $w$.

For $w = 17a^2 - 16a - 24$, $E_w$ has Mordell--Weil rank 2.  Quick searches on $E_w$
find no rational points.  Two-descent proves (first of all) that $\operatorname{rank} E_w(F) \le 2$.
In such cases, it is less likely that $E_w$ has rank $2$, than that it has rank $0$ and
that the two-coverings have no rational points, and indeed that a stronger condition holds,
namely that Cassels--Tate pairings between distinct two-coverings are nontrivial.
Therefore the program calculates the pairing, which turns out to be trivial in this case.
Next, the program searches on reduced models of (two of the) two-coverings,
obtaining two independent generators of $E_w(F)$.  An $\mathcal{S}$-integral point in
the group yields the second elliptic curve of conductor $\n $ (and discriminant $a^6w$).

The program spent a few seconds for each of these discriminants, mostly spent reducing
the two-coverings.  The entire process of finding the two curves of conductor $\n $ took
a minute or so.  This involves some luck, in that the ``right'' values of $d$ were among
the first few discriminants for which the program chose to apply the harder techniques
(two-descent etc).  Some heuristics are used in this guesswork, aiming to test the
more likely discriminants first, so it is a game of both strategy and luck.

For level $(-9a^{2}-11a+3)$ of norm 2915, the space of forms has dimension $3$ and there are
three isogeny classes of elliptic curves.  These were all found without using two-descent.
There were 5184 candidate discriminants; the entire process took about five minutes.
The curves found came from $E_w$ of rank 3, 1 and 2 (in order of search effort required).

On the other hand, for many levels the space is not entirely composed of elliptic curves,
and we do not have a good way to predict whether there should be elliptic curves.  For such
levels we must run the program, with some chosen setting of the ``overall effort'' parameter,
on the full set of candidates $E_w$.  A typical such level is $(12a^{2}+7a+4)$ of norm 3325,
where the space has dimension $3$ and there is (apparently) only one isogeny class.
It took several hours to process all 5184 candidate discriminants using all the techniques.


\section{Enumerating the curves in an isogeny class} \label{section 4}

\subsection{} Now we turn to the next step in our table-building:
given an elliptic curve $E/F$, we find representatives of all
isomorphism classes of elliptic curves $E'/F$ that are isogenous to
$E$ via an isogeny defined over $F$. Recall that two elliptic curves
in an isogeny class are linked by a chain of prime degree isogenies;
in particular, to enumerate an isogeny class we need to find all
isogenies of prime degree, of which there are finitely many for curves
that do not admit CM over the given number field. Over $\Q$, there is
an algorithmic solution to this problem based on the following (see
\cite{cremona97}):

\begin{enumerate}
\item Mazur's theorem, which that states that if $\psi\colon E \rightarrow
E'$ is a $\Q$-rational isogeny of prime degree, then $\deg \psi \leq
19$ or is in $\{37, 43, 67, 163 \}$ \cite{mazur}.
\item V{\'e}lu's formulas, which provide an explicit way to enumerate
all prime degree isogenies with a given domain $E$ (see \cite[III
Prop.~4.12]{silverman92} or \cite[III Section 3.8]{cremona97}).
\end{enumerate}

\subsection{}
V\'elu's formulas are valid for any number field and are implemented
in \sage and \magma, but there is currently no generalization of Mazur's
theorem that gives us an explicit bound on the possible prime degree
isogenies defined over a general number field.

Since we are interested in specific isogeny classes, we solve this
problem by taking a less general perspective: we determine which prime
degree isogenies are possible for a specific isogeny class using the
following well-known result:

\begin{theorem} \label{Galois reps}
Let $E$ be an elliptic curve over a number field $K$. For each prime number $\ell \in \Z$, let
\begin{equation*}
\rho_{E,\ell}\colon \Gal(\overline{\Q}/K) \rightarrow \GL(E[\ell])
\cong \GL_2(\Z/\ell\Z) \end{equation*} be the associated Galois
representation on $\ell$-torsion points, where $E[\ell]$ is the set
(actually group) of $\ell$-torsion points in $E(\overline{K})$. There
exists an isogeny $E \rightarrow E'$ defined over $K$ of prime degree
$\ell$ if and only if $\rho_{E,\ell}$ is reducible over
$\mathbb{F}_\ell$. In particular, if $\rho_{E,\ell}$ is irreducible
(over the algebraic closure of $\mathbb{F}_\ell$), then there can be
no isogenies $E \rightarrow E'$ of prime degree $\ell$.
\end{theorem}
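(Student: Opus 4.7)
The plan is to reduce the statement to the classical correspondence between separable isogenies out of $E$ and finite $K$-rational subgroup schemes of $E$, and then to translate the resulting subgroup condition into the language of representations.

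First I would establish the basic dictionary. Recall that for any finite subgroup $C \subset E(\overline{K})$ that is stable under $\Gal(\overline{K}/K)$, V\'elu's formulas (cited in the paper) produce an elliptic curve $E/C$ together with a $K$-rational separable isogeny $E \rightarrow E/C$ with kernel $C$. Conversely, any separable $K$-rational isogeny $\psi\colon E \rightarrow E'$ is determined up to post-composition with a $K$-isomorphism by its kernel, and that kernel is automatically $\Gal(\overline{K}/K)$-stable. In the prime-degree case $\deg \psi = \ell$, the kernel $\ker \psi$ is a subgroup of $E[\ell]$ of order $\ell$. Thus the existence of a $K$-rational isogeny $E \rightarrow E'$ of prime degree $\ell$ is equivalent to the existence of a $\Gal(\overline{K}/K)$-stable subgroup of $E[\ell]$ of order $\ell$.

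Next I would translate the stability condition into representation theory. Identifying $E[\ell] \cong (\Z/\ell\Z)^2$ as a two-dimensional $\F_\ell$-vector space, a subgroup of order $\ell$ is precisely a one-dimensional $\F_\ell$-subspace, and Galois-stability of the subgroup means stability under the image of $\rho_{E,\ell}$. Having a one-dimensional $\F_\ell$-invariant subspace is exactly the definition of reducibility of the two-dimensional representation $\rho_{E,\ell}$ over $\F_\ell$, so the two conditions match, giving the main equivalence. For the final assertion, note that any $\F_\ell$-invariant line gives rise, after extension of scalars, to an $\overline{\F_\ell}$-invariant line, so irreducibility of $\rho_{E,\ell}$ over $\overline{\F_\ell}$ implies irreducibility over $\F_\ell$ and hence the nonexistence of a degree-$\ell$ $K$-rational isogeny from $E$.

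The substantive content is the correspondence between isogenies and kernels, not anything specifically about representations: the theorem is really V\'elu's construction packaged as a statement about $\rho_{E,\ell}$. The only potential obstacle is invoking the right piece of the standard theory, namely that formation of the quotient $E/C$ commutes with base change so that $E/C$ and the quotient map are defined over $K$ exactly when $C$ is $\Gal(\overline{K}/K)$-stable. This is classical (see, e.g., \cite[III Prop.~4.12]{silverman92}) and can be quoted directly, so no new work is required beyond assembling the pieces.
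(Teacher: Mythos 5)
Your argument is correct: it is the standard dictionary between prime-degree $K$-rational isogenies, Galois-stable order-$\ell$ subgroups of $E[\ell]$, and invariant lines for $\rho_{E,\ell}$, with the ``in particular'' clause following from the fact that irreducibility over $\overline{\F}_\ell$ implies irreducibility over $\F_\ell$. The paper itself supplies no proof --- it quotes the theorem as a well-known result and immediately moves on to Billerey's algorithm --- so there is nothing to compare against; your write-up fills that gap with exactly the expected argument and correct citations.
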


In what follows, we describe our implementation of an algorithm due to
Billerey \cite{billerey:isogenies} that outputs a provably finite list
of primes $p$ such that a given elliptic curve $E$ over a number field
$K$ might have a $p$-isogeny.  We first develop the necessary
background in Section \ref{sec:isogenies1}, and then describe the
implementation of algorithm in Section \ref{sec:isogenies2}.

\subsection{} \label{sec:isogenies1}
Let $M \subset \ZZ[X]$ be the subset of all monic polynomials that do
not vanish at $0$.  For $P,Q \in M$, define $P*Q \in M$ by 
\begin{equation}
  (P*Q)(X) = \res_Z(P(Z),Q(X/Z)Z^{\deg Q}),
\end{equation}
where $\res_Z$ is the resultant with respect to $Z$.  This defines a
commutative monoid structure on $M$ with neutral element $\psi_1(X) =
X-1$ \cite[Lemma~2.1]{billerey:isogenies}.  For $r \geq 1$ and $P \in
M$, define $P^{(r)} \in M$ by  
\begin{equation}
P^{(r)}(X^r) = (P*\Psi_r)(X), \quad \text{where $\Psi_r(X) = X^r -1$.} 
\end{equation}

Let $K$ be a number field of odd degree $d$, and fix an elliptic curve
$E/K$ that does not admit CM over $K$. Let $\ell \in \ZZ$ be a prime
number such that $E$ has good reduction at every prime ideal of $\OK$
dividing $\ell \OK$. By abuse of language, we say that $E$ has good
reduction at $\ell$. In this case, let  
\begin{equation*}
\ell \OK = \prod_{\q_i \mid \ell} \q_i^{v_{\q_i}(\ell)} 
\end{equation*}
be the prime factorization of $\ell \OK$. Associate to $\ell$ the
polynomial 
\begin{equation*}
P_\ell^* = P_{\q_1}^{(12v_{\q_1}(\ell))} *\dotsb * P_{\q_s}^{(12v_{\q_s}(\ell))},
\end{equation*}
where $P_{\q}$ is defined as 
\begin{equation*}
P_\q(X) = X^2 - a_\q X + \Norm(\q),
\end{equation*}
and where as usual $a_\q = \Norm (\q) + 1 - \#E(\OK/\q)$.  Then define
the integer $B_\ell$ by
\begin{equation*}
B_\ell = \prod_{k=0}^{[\frac{d}{2}]} P_\ell^*(\ell^{12k}).
\end{equation*}
where $[\frac{d}{2}]$ denotes the integer part of $\frac{d}{2}$.  We
have the following theorem of Billerey:

\begin{theorem}[{\cite[Corollaire 2.5]{billerey:isogenies}}] Let $p
\in \ZZ$ be a prime such that $E$ admits a $p$-isogeny defined over
$K$. Then one of the following is true:
\begin{enumerate}
\item the prime $p$ divides $6 \Delta_K N_{K/\Q}(\Delta_E)$; or
\item for all primes $\ell$, the number $B_\ell$ is divisible by $p$
(if $K = \Q$, we consider only $\ell \neq p$).
\end{enumerate}
\end{theorem}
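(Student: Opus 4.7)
The plan is to follow the approach of Serre, as formalized by Billerey. First, by Theorem~\ref{Galois reps}, a $p$-isogeny over $K$ yields a Galois-stable line in $E[p]$ on which $G_K$ acts via a character $\lambda\colon G_K \to \F_p^\times$; the Weil pairing forces the complementary character to be $\chi_p\lambda^{-1}$, where $\chi_p$ is the mod-$p$ cyclotomic character. All the analysis is aimed at the twelfth power $\lambda^{12}$, which has enough rigidity to be controlled globally.

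Next, I would pin down the ramification of $\lambda^{12}$. At primes of $K$ not dividing $p\,N_{K/\Q}(\Delta_E)$, the representation $\rho_{E,p}$ is unramified, hence so is $\lambda$. At primes $\q \mid p$ of good reduction, Serre's analysis of the possible inertial actions on $\F_p$-lines in $E[p]$---via fundamental characters of level at most $2$---shows that $\lambda^{12}|_{I_\q}$ is an integer power of the mod-$p$ cyclotomic character.  This is exactly why the exponent $12$ appears: it clears the denominators of the admissible Serre weights.  Primes of bad reduction of $E$ and primes ramified in $K/\Q$ are handled by allowing case~(1) via the factor $6\Delta_K N_{K/\Q}(\Delta_E)$, so we may henceforth treat $\lambda^{12}$ as the Galois-theoretic incarnation of an algebraic Hecke character of $K$ unramified outside $p$.

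Third, for any rational prime $\ell \neq p$ of good reduction, I would evaluate $\lambda^{12}$ on the ideal $\ell\OK = \prod_i \q_i^{v_{\q_i}(\ell)}$ via the $*$-product construction. For each $i$, $\lambda(\operatorname{Frob}_{\q_i})$ is a root mod $p$ of $P_{\q_i}(X)$; the resultant definition of $P*Q$ makes the roots of $P_\ell^*(X)$ precisely the products over $i$ of $(12 v_{\q_i}(\ell))$-th powers of roots of the $P_{\q_i}$. Hence $\lambda^{12}(\ell\OK) = \prod_i \lambda(\operatorname{Frob}_{\q_i})^{12 v_{\q_i}(\ell)}$ is a root mod $p$ of $P_\ell^*$. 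On the other hand, using the Hecke-character structure of $\lambda^{12}$, its value on the principal ideal $(\ell)$ reduces mod $p$ to $\ell^{12k}$ for some non-negative integer $k$, with $k \leq d = [K:\Q]$ after summing the inertial exponents at primes above $p$. The complementary character $(\lambda')^{12}$ produces the symmetric exponent $d-k$, so after swapping $\lambda$ and $\lambda'$ if needed we may assume $k \leq \lfloor d/2 \rfloor$; the odd-degree hypothesis on $K$ is essential here, since it prevents the collision $k = d-k$. Combining the two evaluations yields $p \mid P_\ell^*(\ell^{12k})$ for some $k \in \{0, 1, \ldots, \lfloor d/2 \rfloor\}$, and hence $p \mid B_\ell$, giving case~(2).

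The main obstacle is the ramification analysis at primes above $p$ in step two: it rests on the full Serre classification of Galois-stable $\F_p$-lines in $E[p]$ via fundamental characters of level $\leq 2$ (equivalently, the theory of $p$-divisible groups of height $2$), and the choice of exponent $12$ is dictated precisely by the denominators of the admissible Serre weights $\{0,4,6,8,12\}/12$. The secondary subtlety is the range reduction in the final step, which makes essential use of the odd-degree hypothesis on $K$.
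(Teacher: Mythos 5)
The paper does not prove this statement; it is quoted verbatim from Billerey with a citation to \cite[Corollaire 2.5]{billerey:isogenies}, so there is no in-paper proof to compare against. Your outline is a faithful reconstruction of Billerey's actual argument: isogeny character $\lambda$ from the stable line, passage to $\lambda^{12}$ to kill the denominators in Serre's inertia weights at primes above $p$, absorption of bad and ramified primes into the factor $6\Delta_K N_{K/\Q}(\Delta_E)$, evaluation of $\lambda^{12}$ on $(\ell)$ as a root of $P_\ell^*$ via the resultant construction on one hand and as $\ell^{12k}$ via class field theory on the other, and the swap $\lambda \leftrightarrow \chi_p\lambda^{-1}$ to force $k\le\lfloor d/2\rfloor$. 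As a sketch it is correct in structure, with the usual caveat that the real content lives in the inertia analysis you defer to Serre.

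One point is misattributed: the odd-degree hypothesis is not what makes the range reduction $k\mapsto d-k$ work --- that swap is valid for any $d$, and $k=d/2$ would still land in $\{0,\dots,\lfloor d/2\rfloor\}$. What odd degree actually buys is that $B_\ell\neq 0$: the roots of $P_\ell^*$ are Weil numbers of absolute value $\ell^{6d}$, so $P_\ell^*(\ell^{12k})=0$ forces $k=d/2$, impossible for odd $d$. This is exactly the content of the Remark following the theorem in the paper, and it concerns the usefulness of the criterion rather than its validity. A second, smaller omission: your argument treats only $\ell\neq p$, whereas the statement excludes $\ell=p$ only when $K=\Q$; for $K\neq\Q$ Billerey's bookkeeping also covers primes $\ell=p$ of good reduction, and your sketch would need a word about that case.
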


\begin{remark}
The above criterion is effectively useful only if not all of the
$B_\ell$'s are zero. This is the case for number fields of odd degree
\cite[Corollary 0.2]{billerey:isogenies}.  We note that Billerey gives
a similar criterion for the even degree case.
\end{remark}

\subsection{} \label{sec:isogenies2} Let $K$ be a number field of odd
degree and $E/K$ an elliptic curve without complex multiplication over
$K$ given by a Weierstrass equation with coefficients in $\OK$.  The
following algorithm then outputs a provably finite set of primes
containing $\Red(E/K)$, the set of primes $p$ such that $E$ has a
$p$-isogeny (i.e., such that the Galois representation is reducible).

\begin{enumerate}
\item Compute the set $S_1$ of prime divisors of $6 \Delta_K N_{K/\Q}(\Delta_E)$.
\item Let $\ell_0$ be the smallest prime number not in $S_1$. The
curve $E$ has good reduction at $\ell_0$. If $B_{\ell_0} \neq 0$,
proceed to the next step. Otherwise, reiterate this step with the
smallest prime number $\ell_1$ not in $S_1$ and such that $\ell_1 >
\ell_0$ etc.~until we have some $B_\ell \neq 0$.
\item We now have a non-zero integer $B_\ell$. For greater efficiency,
we can reiterate step 2 to obtain more such $B_\ell \neq 0$. We then
define $S_2$ to be the set of prime factors of the greatest common
divisor of the $B_\ell$'s we have obtained and define $S = S_1 \cup
S_2$.
\item The set $S$ then contains $\Red(E/K)$, although it may contain
other primes. We can eliminate some of these primes by calculating
polynomials $P_\q$ for some prime ideals $\q$ of good reduction --- in
particular, if $P_\q$ is irreducible modulo $p$ (with $\q$ not
dividing~$p$), then $p \notin \Red(E/K)$. The subset $S'$ of $S$ of
prime numbers remaining is then usually small.
\end{enumerate}

Now let $K$ be our cubic number field $F$. Note that CM isogenies are
defined over imaginary quadratic fields. Since $F$ contains no such
subfield, there are no CM isogenies defined over $F$. Therefore, by
using this algorithm in combination with V{\'e}lu's formulas, we can
find representatives of all isomorphisms in a given isogeny class of
elliptic curves over $F$.

\begin{example}\label{ex:12}
Consider the curve $E$ with Weierstrass coefficients $[a^2
  + 1,-a^2 + a - 1,0,1,0]$.  The discriminant of $E$ is $\Delta_E
= 12a^2 - 25a - 43$, and 
\[\Norm_{F/\Q}(\Delta_E) = -67375 = 5^3 \cdot 7^2 \cdot 11.\]  
Thus $S_1 = \set{2, 3, 5, 7, 11, 23 }$.  Computing
$B_\ell$ for $\ell \in \set{13, 17, 19, 29}$, we see that the greatest
common divisor of the $B_\ell$ is $2^{16} \cdot 3^9$.  Then $S_2 =
\set{2,3}$, and so $S = S_{1} = \set{2,3,5,7,11,23}$.  Let $\p_2$ denote the
prime above $2$.  Then $P_{\p_2}(x) = x^2 + 3x + 8$ is irreducible
modulo $5$, $7$, and $11$.  Let $\p_{17}$ denote the degree $1$ prime
above $17$.  Then $P_{\p_{17}}(x) = x^2 + 6x + 17$ is irreducible
modulo $23$.  It follows that $\Red(E/F) \subseteq \set{2,3}$.  Using
 V\'elu's formulas, we compute $2$ and $3$-isogenies of $E$ and all
resulting curves until we get a set of elliptic curves which is closed
under $2$ and $3$-isogenies, up to isomorphism.  This computation
yields a set of $12$ representatives for the isomorphism classes of
elliptic curves in the isogeny class of $E$.  This is the unique
isogeny class of norm conductor $385$ (label 140a).  The prime isogeny graph
is shown in Figure~\ref{fig:isogeny}. 
\begin{figure}
\includegraphics[scale=0.5]{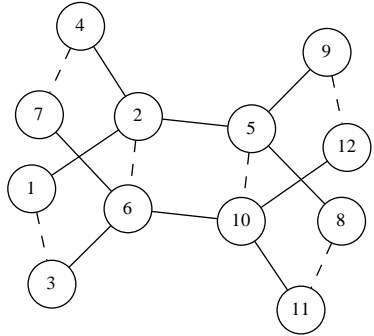}      
  \caption{Prime isogeny graph for elliptic curve of norm conductor $385$. The solid lines represent $2$-isogenies, and the dashed lines represent $3$-isogenies. }\label{fig:isogeny}
\end{figure}

\end{example}

\section{Results \& tables} \label{section 5}

\subsection{} We computed $H^4(\Gamma_{0}(\n); \tilde{\Omega}_{\C})$
for the first $4246$ levels, ordered by norm.  This includes all of
the levels of norm less than $11575$ and three of the ideals of norm
$11575$.  The current bottleneck preventing further computation
performing linear algebra on large sparse matrices.  Because of this,
we expect to be able to push the computation further in special
families, such as congruence subgroups of prime level.

Of these $4246$ levels, Heuristic~\ref{conj:eis} implies that $1492$
have nontrivial cuspidal cohomology.  Of these,
Heuristic~\ref{conj:oldform} implies that $1175$ have a nontrivial
newspace.  We found elliptic curves of matching conductor at $1020$ of
these levels, accounting for the full newspace in all but $213$
levels.  These elliptic curves comprise our dataset $\D$, of which we
provide a sample in Appendix~\ref{appendix}.  Of the remaining $213$
levels, one falls within the range of our Hecke computations, and we
can see that it corresponds to the base change of the classical weight
two newform of level $23$ with eigenvalues in $\Q (\sqrt{5})$
(cf.~\cite[\S9]{gunnells-yasaki12}). 

\subsection{}
This leaves $212$ levels with unexplained cuspidal cohomology.  We
note, however, that for each of these $212$ levels, the
cohomology that is left has rank $2$ or greater; in particular there
were no predicted new cuspidal subspaces of dimension $1$ (according
to our heuristics) for which we could not find a corresponding
elliptic curve.  This constitutes circumstantial evidence that our list
of elliptic curves over $F$ of norm conductor less than $11575$ may be
complete. That is to say, there is no clear reason (based on all the
information now at hand) to predict another curve at any of these levels.

We judge it very likely that no elliptic curves are missing from our list.
We conclude this on the basis of detailed information obtained from the
search algorithms, and other circumstantial evidence.  All the curves were
found with a certain level of effort, and searching with substantially more
effort produces no more curves.  On close examination of the output, it seems
likely that on the auxiliary curves $E_w$, all {\it integral} points, and
all points of height small enough to be relevant, were found in the searches.
If the conductors were substantially larger, one would be less confident;
eventually there must certainly exist curves that would require much more
effort to find using these methods.  A curve that is missing would be likely
to be an interesting curve with some unusual properties, such as large height.

In the course of computing the elliptic curves in $\D$, we encountered
curves whose discriminant norm was small (less than $100000$), but
whose conductor lay outside the limits of our cohomology computations.
These curves, together with the curves in $\D$, comprise a larger set
of elliptic curves over $F$ of small conductor. Partial data can be
downloaded from \cite{klages-mundt12} (as well as data for elliptic
curves over other nonreal cubic fields).  The complete larger dataset
is available via the $L$-functions and Modular Forms Database
(\url{http://www.lmfdb.org/}) \cite{lmfdb}.

\subsection{}
In the remainder of this section, we provide tables summarizing our
computations, and other highlights of the data. In all tables, only
elliptic curves from $\D$ are included.  In these tables, \#isom
refers to the number of isomorphism classes, \#isog refers to the
number of isogeny classes, $\n$ and $\Norm(\n)$ refer respectively to
the conductor and norm conductor of a given elliptic curve.  We encode
Weierstrass equations as  vectors of coefficients: $[a_1,a_2,a_3,a_4,a_6]$.

Table~\ref{tab:ecranks} gives the number of isogeny classes and
isomorphism classes in $\D$ that we found, sorted by algebraic rank.
Note that, in a few cases, \magma gave an upper and lower bound on the
rank that were not equal.  In those instances, we switched to an
isogenous curve and recomputed to try to get a larger lower bound or
smaller upper bound.  This was successful for every curve in our
dataset.  The first rank one elliptic curve we found occurs at norm
conductor $719$, and and the first rank two curve occurs at norm
conductor $9173$.  For every curve in $\D$, we found the algebraic
rank agreed with the analytic rank, where analytic rank was computed
by \magma.  The algorithm used is heuristic, numerically computing
derivatives of the $L$-function $L(E,s)$ at $s=1$ until one appears to
be nonzero.
\begin{table}
\caption{\label{tab:ecranks}Elliptic curves over $F$}
\begin{tabular}{c r r r}
\toprule
\bf{rank} &  \bf{\#isog} &  \bf{\#isom} &  \bf{smallest} $\Norm(\n)$ \\
\midrule
$0$    &  506 &  1729 &  $89$ \\
$1$    &  812 &  1483 &  $719$ \\
$2$    &  8 & 9 &  9173 \\
\midrule
total  & 1326 & 3221  &  \\
\bottomrule
\end{tabular}
\end{table}

In Table~\ref{tab:numisog} we give the sizes of isogeny classes and
the number of isogeny classes of each size in $\D$.  We find some
isogeny classes of cardinality $12$, which is larger than the
cardinalities observed over $\Q$ and $\Q(\sqrt{5})$ (see
\cite{bdkm+12}).  The computation of one such class is described in
Example~\ref{ex:12}; the other class appears in
Appendix~\ref{appendix} at label 247a (norm conductor 665).

\begin{table}
\caption{\label{tab:numisog}Number of isogeny classes of a given size}
\begin{tabular}{|c| | c | c | c | c | c | c | c | c | }
\hline
\textbf{size} &1 & 2 & 3 & 4 & 6 & 8 & 10 & 12  \\
\hline
\textbf{number}& 645 & 634 & 64 & 484 & 82 & 70 & 1 & 2  \\
\hline
\end{tabular}
\end{table}

Table~\ref{tab:primeisog} gives the number of isogeny classes and the
number of isomorphism classes with isogenies of each prime degree that
we encountered. Note that these may not represent all possible prime
degrees of isogenies over $F$. We also provide an example curve, which
need not have minimal norm conductor, that exhibits an isogeny of the
given degree.  

\begin{table}
\caption{\label{tab:primeisog}Prime isogeny degrees}
\begin{tabular}{c r r@{\hspace{5ex}} l r}
\toprule
\bf{degree}  & \bf{\#isog} & \bf{\#isom} & \bf{example curve} & $\Norm(\n)$ \\
\midrule
None &  754 & 754 &  $[1, a^2 + a - 1, a^2 + a, -a - 1, -a^2 + 1]$ & $727$ \\
2    &  824 & 3844 &  $[a + 1, -a^2 - a - 1, a^2 + a, -a^2, -a^2 + 1]$ & $89$ \\
3    &  435 & 1452 &  $[a, a - 1, 1, -a, 0]$ & $136$ \\
5    &  86 & 232 &  $[a, -a, a^2 + a + 1, -a, -2a^2 + 1]$ & $289$ \\
7    &  30 & 72 &  $[a, -a - 1, a^2 + 1, 1, -a^2]$ & $625$ \\
\bottomrule
\end{tabular}
\end{table}

Table~\ref{tab:torsionsubgroups} gives the number of isomorphism
classes of elliptic curves with given torsion structure. Again we
include an example curve, which need not have minimal norm conductor,
realizing a given torsion group. We find examples for all torsion
subgroups that appear infinitely often over $F$, as proven in
\cite{najman12}, and no others. It is unknown whether there are other
subgroups that only appear finitely over $F$.

\begin{table}[htb]
\caption{\label{tab:torsionsubgroups}Torsion subgroups}
\begin{tabular}{l@{\hspace{1ex}} r l r}
\toprule
\bf{torsion} & \bf{\#isom} & \bf{example curve} & $\Norm(\n)$ \\
\midrule
$0 $                    & 738 & $[-a^2 + a, -a^2 + a - 1, -1, 0, 0]$ & $719$ \\ 
$\Z_2$               & 1222 & $[-a^2 + a, -a^2, a^2 - a + 1, -1, 0]$ & $817$ \\ 
$\Z_3$               & 223 & $[1, a, 0, 2a^2 - a - 3, 2a^2 - 2a - 3]$ & $773$ \\ 
$\Z_2 \times \Z_2$ & 254 & $[0, -a - 1, 0, 6a - 5, -4a^2 + 7a - 3]$ & $512$ \\ 
$\Z_4$               & 301 & $[a^2, -a^2 - 1, a^2, a^2 + 1, -a^2 + a]$ & $911$ \\ 
$\Z_5$               & 53 & $[-1, a^2 - a, a, 1, 0]$ & $289$ \\ 
$\Z_6$               & 251 & $[a, -a - 1, a^2, -a^2 + a + 1, 0]$ & $593$ \\ 
$\Z_7$               & 17 & $[a^2 - 1, -a + 1, a^2 - a + 1, 0, 0]$ & $293$ \\ 
$\Z_8$               & 29 & $[a, 1, a, 0, 0]$ & $553$ \\ 
$\Z_2 \times \Z_4$ & 77 & $[0, a^2 + 1, 0, a^2, 0]$ & $512$ \\ 
$\Z_9$               & 6 & $[0, -a, -a - 1, -a^2 - a, 0]$ & $107$ \\ 
$\Z_{10}$              & 20 & $[a - 1, -a^2 - 1, a^2 - a, a^2, 0]$ & $89$ \\ 
$\Z_{12}$              & 8 & $[a, -a^2 + a + 1, a + 1, 0, 0]$ & $185$ \\ 
$\Z_2 \times \Z_6$ & 16 & $[a, a + 1, a, 6a - 5, 4a^2 - 7a + 2]$ & $115$ \\ 
$\Z_2 \times \Z_8$ & 5 & $[a, -1, a, -5a^2 + 8a - 5, -4a^2 + 9a - 4]$ & $805$ \\ 
$\Z_2 \times \Z_{12}$ & 1 & $[a^2,-a^2-a-1,a^2+1,-4a^2+11a-5,6a^2-15a+11]$ & $385$ \\
\bottomrule
\end{tabular}
\end{table}

Finally, we consider whether any curves that we found have CM (i.e.,
whether or not $\End(E) \not \simeq \Z$).  A complete list of CM
$j$-invariants in $F$ (provided to us by Cremona) is given in
Table~\ref{tab:jinv}.  Examining the $j$-invariants of the elliptic
curves in $\D$, we see that no elliptic curve in $\D$ has CM.  At
larger levels we do see CM curves.  In particular, we find some with
CM in a quadratic order of discriminant $-3$, 
such as $[0, 0, a^{2}+a, 0, -a^{2}+1]$.

\begin{table}
  \caption{The CM $j$-invariants in $F$ with fundamental discriminant $D$ and conductor $f$.}\label{tab:jinv}
$\begin{array}{rrr}
  \toprule
D & f & j\\
\midrule
-3 & 3 & -12288000\\
-3 & 2 & 54000\\
-3 & 1 & 0\\
-4 & 2 & 287496\\
-4 & 1 & 1728\\
-7 & 2 & 16581375\\
-7 & 1 & -3375\\
-8 & 1 & 8000\\
-11 & 1 & -32768\\
-19 & 1 & -884736\\
-43 & 1 & -884736000\\
-67 & 1 & -147197952000\\
-163 & 1 & -262537412640768000\\
-23 & 2 & 3792102031375a^2 - 6654675189750a + 5023465669375\\
-23 & 1 & -1084125a^2 + 1904875a - 1437500\\
\bottomrule
\end{array}$
\end{table}

\bibliographystyle{amsplain_initials_eprint_doi_url}
\bibliography{neg23paper}
\appendix
\newpage
\iflandscapetable
\begin{landscape}
\fi

\thispagestyle{empty}
\pagestyle{empty}
\section{\texorpdfstring{Table of Elliptic Curves over $F$}{Table of Elliptic Curves over F}} \label{appendix}

\begin{center}
\footnotesize

\iflandscapetable
\begin{longtable}{c c c p{0.85\textheight} r r}
\else
\begin{longtable}{c c c p{0.5\textwidth} r r}
\fi

\toprule
\bf{label} & $\Norm(\n)$ & \bf{generator of $\n$} & \bf{Weierstrass model} & \bf{rank} & \bf{torsion} \\
\midrule
\endfirsthead
\toprule
\bf{label} & $\Norm(\n)$ & \bf{generator of $\n$} & \bf{Weierstrass model} & \bf{rank} & \bf{torsion} \\
\midrule
\endhead
\bottomrule
\endfoot
\bottomrule
\endlastfoot
$ 33a $ & 89 & $ 4a^2-a-5 $  & $ [a + 1,2a^2 + 2a + 2,2a^2 + a,8a^2 + 2a - 3,6a^2 - 2a - 5] $ & 0 & $ \ZZ_{10} $ \\*
&&&$ [a + 1,2a^2 + 2a + 2,2a^2 + a,3a^2 + 7a - 8,2a^2 - 8a + 3] $ & 0 & $ \ZZ_{10} $ \\*
&&&$ [a + 1,2a^2 + 2a + 2,2a^2 + a,-17a^2 + 72a - 63,-144a^2 + 336a - 291] $ & 0 & $ \ZZ_2 $ \\*
&&&$ [a + 1,2a^2 + 2a + 2,2a^2 + a,-22a^2 + 82a - 53,-88a^2 + 334a - 321] $ & 0 & $ \ZZ_2 $ \\
$ 40a $ & 107 & $ -5a^2+3a $  & $ [0,2a^2 + 1,-a,4a^2 + a,3a^2 - 2a - 3] $ & 0 & $ \ZZ_9 $ \\*
&&&$ [0,2a^2 + 1,-a,14a^2 + 141a + 100,-968a^2 + 444a + 887] $ & 0 & $ \ZZ_3 $ \\*
&&&$ [0,2a^2 + 1,-a,34a^2 + 51a + 20,-2515a^2 + 676a + 1943] $ & 0 & $ 0 $ \\
$ 43a $ & 115 & $ -2a^2-2a-3 $  & $ [1,2a^2 + 4,-a^2 - a,7a^2 + 4,4a^2 - a + 1] $ & 0 & $ \ZZ_{12} $ \\*
&&&$ [1,2a^2 + 4,-a^2 - a,-3a^2 + 15a - 6,4a^2 - 4a + 8] $ & 0 & $ \ZZ_2\times\ZZ_6 $ \\*
&&&$ [1,2a^2 + 4,-a^2 - a,-28a^2 + 20a + 9,-68a^2 + 67a + 84] $ & 0 & $ \ZZ_6 $ \\*
&&&$ [1,2a^2 + 4,-a^2 - a,-138a^2 + 250a - 181,916a^2 - 1607a + 1220] $ & 0 & $ \ZZ_6 $ \\*
&&&$ [1,2a^2 + 4,-a^2 - a,17a^2 - 5a + 9,19a^2 + a + 3] $ & 0 & $ \ZZ_4 $ \\*
&&&$ [1,2a^2 + 4,-a^2 - a,-8a^2 + 40a - 26,-69a^2 + 152a - 113] $ & 0 & $ \ZZ_2\times\ZZ_2 $ \\*
&&&$ [1,2a^2 + 4,-a^2 - a,-333a^2 + 550a - 396,-4452a^2 + 7789a - 5755] $ & 0 & $ \ZZ_2 $ \\*
&&&$ [1,2a^2 + 4,-a^2 - a,-83a^2 + 250a - 216,862a^2 - 1681a + 1125] $ & 0 & $ \ZZ_2 $ \\
$ 52a $ & 136 & $ 6a^2-2a-2 $  & $ [a + 1,7a^2 + a + 3,6a^2,36a^2 - 16a - 20,6a^2 - 38a - 26] $ & 0 & $ \ZZ_9 $ \\*
&&&$ [a + 1,7a^2 + a + 3,6a^2 + 2a,190a^2 - 112a - 180,-1128a^2 - 200a + 500] $ & 0 & $ \ZZ_3 $ \\*
&&&$ [a + 1,7a^2 + a + 3,6a^2,306a^2 - 156a - 280,628a^2 - 852a - 996] $ & 0 & $ 0 $ \\
$ 58a $ & 161 & $ -5a^2+5a+4 $  & $ [a^2 + a,a + 3,a - 1,594a^2 - 4a - 340,-3877a^2 - 3207a - 212] $ & 0 & $ \ZZ_4 $ \\*
&&&$ [a^2 + a,a + 3,a - 1,4a^2 + a,2a^2 - 2a - 3] $ & 0 & $ \ZZ_8 $ \\*
&&&$ [a^2 + a,a + 3,a - 1,324a^2 + 131a - 100,848a^2 + 2478a + 1351] $ & 0 & $ \ZZ_2 $ \\*
&&&$ [a^2 + a,a + 3,a - 1,39a^2 + a - 20,-28a^2 - 63a - 32] $ & 0 & $ \ZZ_2\times\ZZ_4 $ \\*
&&&$ [a^2 + a,a + 3,a - 1,44a^2 + 6a - 20,-19a^2 - 23a - 8] $ & 0 & $ \ZZ_2\times\ZZ_2 $ \\*
&&&$ [a^2 + a,a + 3,a - 1,-156a^2 - 39a + 60,-770a^2 - 64a + 389] $ & 0 & $ \ZZ_2 $ \\
$ 59a $ & 167 & $ -5a^2+3a-3 $  & $ [1,2a^2 + 4,-a^2 - a + 1,8a^2 - a + 4,6a^2 - 2a + 1] $ & 0 & $ \ZZ_7 $ \\*
&&&$ [1,2a^2 + 4,-a^2 - a + 1,-12a^2 + 59a - 21,-73a^2 + 227a - 211] $ & 0 & $ 0 $ \\
$ 70a $ & 185 & $ -a^2-5a+4 $  & $ [a^2 + a,-a^2 + 3a + 3,-1,3a^2 + 5a + 1,3a^2 + a - 1] $ & 0 & $ \ZZ_{12} $ \\*
&&&$ [a^2 + a,-a^2 + 3a + 3,-1,3a^2 + 10a - 89,7a^2 - 48a - 401] $ & 0 & $ \ZZ_6 $ \\*
&&&$ [a^2 + a,-a^2 + 3a + 3,-1,-2347a^2 + 4145a - 3209,-80439a^2 + 141063a - 106939] $ & 0 & $ \ZZ_2 $ \\*
&&&$ [a^2 + a,-a^2 + 3a + 3,-1,18a^2 + 15a - 29,-34a^2 + 9a - 39] $ & 0 & $ \ZZ_4 $ \\*
&&&$ [a^2 + a,-a^2 + 3a + 3,-1,3a^2 + 5a - 4,3a^2 - 3a - 10] $ & 0 & $ \ZZ_2\times\ZZ_6 $ \\*
&&&$ [a^2 + a,-a^2 + 3a + 3,-1,-122a^2 + 260a - 214,-1280a^2 + 2192a - 1688] $ & 0 & $ \ZZ_2\times\ZZ_2 $ \\*
&&&$ [a^2 + a,-a^2 + 3a + 3,-1,3a^2 + 1,-a^2 + 6a - 15] $ & 0 & $ \ZZ_{12} $ \\*
&&&$ [a^2 + a,-a^2 + 3a + 3,-1,-137a^2 + 295a - 179,-1445a^2 + 2353a - 1473] $ & 0 & $ \ZZ_4 $ \\
$ 85a $ & 223 & $ -5a^2+3a-4 $  & $ [a^2,2a^2 + 2a + 3,-a - 1,-5a^2 + 37a - 52,53a^2 - 136a + 94] $ & 0 & $ \ZZ_4 $ \\*
&&&$ [a^2,2a^2 + 2a + 3,2a^2 - a - 1,9a^2 + 3a - 1,8a^2 - a - 5] $ & 0 & $ \ZZ_8 $ \\*
&&&$ [a^2,2a^2 + 2a + 3,-a - 1,10a^2 + 2a - 7,5a^2 - 9a - 9] $ & 0 & $ \ZZ_2\times\ZZ_4 $ \\*
&&&$ [a^2,2a^2 + 2a + 3,-a - 1,270a^2 - 598a - 602,2135a^2 - 8720a - 7783] $ & 0 & $ \ZZ_2 $ \\*
&&&$ [a^2,2a^2 + 2a + 3,-a - 1,25a^2 - 33a - 42,9a^2 - 190a - 148] $ & 0 & $ \ZZ_2\times\ZZ_2 $ \\*
&&&$ [a^2,2a^2 + 2a + 3,-a - 1,20a^2 - 28a - 42,19a^2 - 184a - 169] $ & 0 & $ \ZZ_2 $ \\
$ 92a $ & 253 & $ 7a^2-5a-5 $  & $ [a^2 + a,2a + 3,a,179a^2 - 83a - 170,1403a^2 - 497a - 1172] $ & 0 & $ \ZZ_4 $ \\*
&&&$ [a^2 + a,2a + 3,a,4a^2 + 2a,4a^2 - a - 3] $ & 0 & $ \ZZ_8 $ \\*
&&&$ [a^2 + a,2a + 3,a,14a^2 - 3a - 10,32a^2 - 18a - 32] $ & 0 & $ \ZZ_2\times\ZZ_4 $ \\*
&&&$ [a^2 + a,2a + 3,a,9a^2 - 3a - 10,33a^2 - 7a - 28] $ & 0 & $ \ZZ_2\times\ZZ_2 $ \\*
&&&$ [a^2 + a,2a + 3,a,-36a^2 - 78a - 90,142a^2 + 503a + 127] $ & 0 & $ \ZZ_2 $ \\*
&&&$ [a^2 + a,2a + 3,a,-26a^2 + 72a + 70,388a^2 + 107a - 147] $ & 0 & $ \ZZ_2 $ \\
$ 94a $ & 259 & $ 4a^2-7a-1 $  & $ [0,2a^2 + 2a,-a^2 - a,5a^2 - 3a - 5,-4a^2 - 3a] $ & 0 & $ \ZZ_9 $ \\*
&&&$ [0,2a^2 + 2a,-a^2 - a,1715a^2 + 1167a - 5225,55166a^2 + 51300a - 133449] $ & 0 & $ 0 $ \\*
&&&$ [0,2a^2 + 2a,-a^2 - a,-5a^2 + 17a + 15,-24a^2 + 10a + 21] $ & 0 & $ \ZZ_9 $ \\*
&&&$ [0,2a^2 + 2a,-a^2 - a,205a^2 - 33a - 205,-1334a^2 - 265a + 374] $ & 0 & $ \ZZ_3 $ \\
$ 101a $ & 275 & $ 8a^2-2a-3 $  & $ [a^2 + 1,a^2 + 3a + 2,-a,-81a^2 - 4a + 40,-514a^2 + 290a + 509] $ & 0 & $ \ZZ_2 $ \\*
&&&$ [a^2,4a^2 + 3a + 1,a^2 - a - 1,18a^2 - 4a - 13,3a^2 - 11a - 10] $ & 0 & $ \ZZ_{10} $ \\*
&&&$ [a^2 + 1,a^2 + 3a + 2,-a,-96a^2 + 21a + 25,-558a^2 + 346a + 448] $ & 0 & $ \ZZ_2 $ \\*
&&&$ [a^2,4a^2 + 3a + 1,a^2 - a - 1,-7a^2 - 9a - 3,-118a^2 + 58a + 111] $ & 0 & $ \ZZ_{10} $ \\*
&&&$ [a^2 + a,-a^2 + 2a + 4,0,-3a - 1,-8a^2 - 17a - 8] $ & 0 & $ \ZZ_{10} $ \\*
&&&$ [a^2 + a,-a^2 + 2a + 4,0,-25a^2 - 8a + 9,39a^2 - 30a - 45] $ & 0 & $ \ZZ_{10} $ \\
$ 105a $ & 289 & $ 3a^2-7a-2 $  & $ [a^2,2a^2 + 3a + 1,-a - 1,9a^2 - 5,3a^2 - 3a - 4] $ & 0 & $ \ZZ_5 $ \\*
&&&$ [a^2,2a^2 + 3a + 1,-a - 1,14a^2 + 15a - 15,33a^2 + 45a - 15] $ & 0 & $ 0 $ \\
$ 107a $ & 293 & $ -5a^2-2a-2 $  & $ [a^2 + a,a^2 + 2a + 5,a^2 + a - 1,9a^2 + 5a + 4,12a^2 + a - 4] $ & 0 & $ \ZZ_7 $ \\*
&&&$ [a^2 + a,a^2 + 2a + 5,a^2 + a - 1,24a^2 - 5a - 21,-46a^2 - 71a - 48] $ & 0 & $ 0 $ \\
$ 128a $ & 344 & $ 6a^2-2a-8 $  & $ [a,4a^2 + 2a,2a^2 + 2a + 2,-98128a^2 + 37792a + 82728,1108440a^2 - 10880182a - 8872784] $ & 0 & $ 0 $ \\*
&&&$ [a,4a^2 + 2a,2a^2 + 2a + 2,12a^2 - 8a - 12,-10a^2 - 12a - 4] $ & 0 & $ \ZZ_7 $ \\*
&&&$ [a,4a^2 + 2a,2a^2 + 2a + 2,42a^2 + 42a + 8,584a^2 - 432a - 660] $ & 0 & $ \ZZ_7 $ \\
$ 132a $ & 359 & $ 7a^2-6a-2 $  & $ [1,2a + 3,-a^2 - a,3a^2 + 5a + 3,3a^2 + 2a] $ & 0 & $ \ZZ_6 $ \\*
&&&$ [1,2a + 3,-a^2 - a,18a^2 + 10a - 2,25a^2 - 21a - 30] $ & 0 & $ \ZZ_6 $ \\*
&&&$ [1,2a + 3,-a^2 - a,53a^2 - 10a - 37,245a^2 - 31a - 163] $ & 0 & $ \ZZ_2 $ \\*
&&&$ [1,2a + 3,-a^2 - a,53a^2 - 5a - 37,253a^2 - 46a - 184] $ & 0 & $ \ZZ_2 $ \\
$ 140a $ & 385 & $ -6a^2+7a+5 $  & $ [a^2 + 1,2a^2 + a + 2,-a^2 - a,9a^2 - a - 3,3a^2 - 2a - 2] $ & 0 & $ \ZZ_{12} $ \\*
&&&$ [a^2 + 1,2a^2 + a + 2,-a^2 - a,-11a^2 + 24a + 27,100a^2 + 79a - 2] $ & 0 & $ \ZZ_4 $ \\*
&&&$ [a^2 + 1,2a^2 + a + 2,-a^2 - a,9a^2 - a - 8,-a^2 - 5a - 3] $ & 0 & $ \ZZ_2\times\ZZ_{12} $ \\*
&&&$ [a^2 + 1,2a^2 + a + 2,-a^2 - a,4a^2 + 14a - 68,-22a^2 - 76a + 148] $ & 0 & $ \ZZ_{12} $ \\*
&&&$ [a^2 + 1,2a^2 + a + 2,-a^2 - a,149a^2 - 346a - 308,508a^2 - 3446a - 2909] $ & 0 & $ \ZZ_6 $ \\*
&&&$ [a^2 + 1,2a^2 + a + 2,-a^2 - a,14a^2 - 16a - 28,-16a^2 - 66a - 58] $ & 0 & $ \ZZ_2\times\ZZ_6 $ \\*
&&&$ [a^2 + 1,2a^2 + a + 2,-a^2 - a,-5821a^2 + 6819a - 3688,-141983a^2 + 262157a - 249179] $ & 0 & $ \ZZ_2 $ \\*
&&&$ [a^2 + 1,2a^2 + a + 2,-a^2 - a,-26a^2 + 49a + 7,53a^2 + 168a - 75] $ & 0 & $ \ZZ_2\times\ZZ_4 $ \\*
&&&$ [a^2 + 1,2a^2 + a + 2,-a^2 - a,-41a^2 + 74a - 68,-220a^2 + 350a - 467] $ & 0 & $ \ZZ_6 $ \\*
&&&$ [a^2 + 1,2a^2 + a + 2,-a^2 - a,-351a^2 + 429a - 233,-2409a^2 + 4504a - 4046] $ & 0 & $ \ZZ_2\times\ZZ_2 $ \\*
&&&$ [a^2 + 1,2a^2 + a + 2,-a^2 - a,59a^2 + 69a - 73,307a^2 + 308a + 124] $ & 0 & $ \ZZ_4 $ \\*
&&&$ [a^2 + 1,2a^2 + a + 2,-a^2 - a,-81a^2 + 119a - 618,-2523a^2 + 775a - 6857] $ & 0 & $ \ZZ_2 $ \\
$ 145a $ & 392 & $ -8a^2+6a+6 $  & $ [a^2 + 1,2a^2 + 2a,2a,-1154a^2 + 2028a - 1540,-27332a^2 + 47956a - 36202] $ & 0 & $ 0 $ \\*
&&&$ [1,3a^2 + a + 2,2a^2,8a^2 - 4,4a^2 - 2a - 4] $ & 0 & $ \ZZ_7 $ \\*
&&&$ [a + 1,2a^2 + a + 4,4a^2 + 2,6a^2 + 2a + 2,-4a^2 + 12a - 8] $ & 0 & $ \ZZ_7 $ \\
$ 163a $ & 440 & $ 8a^2+2a-6 $  & $ [a^2,4a^2 + a + 3,2a^2 + 2a + 2,14a^2 - 4a - 4,4a^2 - 12a - 8] $ & 0 & $ \ZZ_6 $ \\*
&&&$ [a^2,4a^2 + a + 3,2a^2 + 2a + 2,24a^2 - 4a - 14,32a^2 - 24a - 42] $ & 0 & $ \ZZ_6 $ \\*
&&&$ [a^2,4a^2 + a + 3,2,6a^2 - 16,-24a^2 + 28a - 24] $ & 0 & $ \ZZ_2 $ \\*
&&&$ [a^2,4a^2 + a + 3,2,-74a^2 + 160a - 136,-736a^2 + 1364a - 1072] $ & 0 & $ \ZZ_2 $ \\
$ 168a $ & 449 & $ a^2-8a $  & $ [a^2 + a,a^2 + 3a + 3,a^2,9a^2 + 4a - 3,8a^2 - 2a - 7] $ & 0 & $ \ZZ_6 $ \\*
&&&$ [a^2 + a,a^2 + 3a + 3,a^2,4a^2 + 9a - 8,5a^2 - 10a + 2] $ & 0 & $ \ZZ_6 $ \\*
&&&$ [a^2 + a,a^2 + 3a + 3,a^2,-21a^2 + 59a - 43,-120a^2 + 232a - 186] $ & 0 & $ \ZZ_2 $ \\*
&&&$ [a^2 + a,a^2 + 3a + 3,a^2,-31a^2 + 59a - 38,-133a^2 + 231a - 174] $ & 0 & $ \ZZ_2 $ \\
$ 181a $ & 475 & $ -4a^2-7a $  & $ [0,2a^2 + 2,-a,5a^2 - 2a - 1,a^2 - 2a - 1] $ & 0 & $ \ZZ_5 $ \\*
&&&$ [0,2a^2 + 2,-a,-5a^2 + 28a + 19,91a^2 + 34a - 43] $ & 0 & $ 0 $ \\
$ 185a $ & 503 & $ a^2-a-8 $  & $ [a^2 + a + 1,a^2 + 4a + 4,3a^2 + a - 1,15a^2 + 10a,24a^2 - 2a - 15] $ & 0 & $ \ZZ_6 $ \\*
&&&$ [a^2 + a + 1,a^2 + 4a + 4,3a^2 + a - 1,-20a^2 + 30a + 35,-50a^2 + 19a + 43] $ & 0 & $ \ZZ_6 $ \\*
&&&$ [a^2 + a + 1,a^2 + 4a + 4,3a^2 + a - 1,-10a^2 - 95a - 65,-717a^2 - 670a - 97] $ & 0 & $ \ZZ_2 $ \\*
&&&$ [a^2 + a + 1,a^2 + 4a + 4,3a^2 + a - 1,5a^2 - 90a - 70,-850a^2 - 696a - 41] $ & 0 & $ \ZZ_2 $ \\
$ 186a $ & 505 & $ -8a+1 $  & $ [a^2 + a,2a + 5,-1,6a^2 + 5a + 4,6a^2 + 2a - 1] $ & 0 & $ \ZZ_6 $ \\*
&&&$ [a^2 + a,2a + 5,-1,11a^2 - 1,5a^2 - 5a - 4] $ & 0 & $ \ZZ_6 $ \\*
&&&$ [a^2 + a,2a + 5,-1,-4a^2 + 20a - 1,-14a^2 + 47a - 22] $ & 0 & $ \ZZ_2 $ \\*
&&&$ [a^2 + a,2a + 5,-1,16a^2 + 15a - 16,40a^2 + 42a - 57] $ & 0 & $ \ZZ_2 $ \\
$ 187a $ & 505 & $ -2a^2-7a+2 $  & $ [a^2 + a,a^2 + 3a + 4,a^2 - 1,12a^2 + 7a,18a^2 - 10] $ & 0 & $ \ZZ_{10} $ \\*
&&&$ [a^2 + a,a^2 + 3a + 4,a^2 - 1,-53a^2 + 67a - 10,-277a^2 + 287a - 39] $ & 0 & $ \ZZ_2 $ \\*
&&&$ [a^2 + a,a^2 + 3a + 4,a^2 - 1,7a^2 + 2a,2a^2 - 5a - 4] $ & 0 & $ \ZZ_{10} $ \\*
&&&$ [a^2 + a,a^2 + 3a + 4,a^2 - 1,192a^2 - 13a - 210,-768a^2 - 130a - 74] $ & 0 & $ \ZZ_2 $ \\
$ 189a $ & 512 & $ 8a^2-8 $  & $ [-2a^2 + 2a,4a^2 + a - 2,4a + 6,8a^2 - 10a - 10,-8a^2 - 12a - 8] $ & 0 & $ \ZZ_2\times\ZZ_4 $ \\*
&&&$ [-2a^2 + 4a + 2,8a^2 - 6a - 2,8a^2 + 12a,-24a^2 - 24a - 16,-144a^2 + 80a + 16] $ & 0 & $ \ZZ_2\times\ZZ_2 $ \\*
&&&$ [-2a^2 + 2a,4a^2 - 2a + 4,-4a^2 + 8a + 8,16a^2 - 8a - 8,16a^2 - 32a - 32] $ & 0 & $ \ZZ_8 $ \\*
&&&$ [2a + 2,4a - 2,4a^2 + 12a + 4,-24a,-64a^2 - 16a] $ & 0 & $ \ZZ_4 $ \\*
&&&$ [-2a^2 + 4a + 2,8a^2 + 4,8a^2 + 16a,8a^2 - 8a - 8,-128a^2 + 96a] $ & 0 & $ \ZZ_2 $ \\*
&&&$ [-2a^2 + 4a + 2,8a^2 + 4,8a^2 + 16a + 8,-104a^2 + 216a - 216,-1760a^2 + 2880a - 2240] $ & 0 & $ \ZZ_2 $ \\
$ 202a $ & 553 & $ 9a^2-4a-2 $  & $ [a,3a^2 + 1,a^2 - a - 1,11a^2 - 3a - 88,-112a^2 - 9a + 267] $ & 0 & $ \ZZ_4 $ \\*
&&&$ [a,3a^2 + 1,a^2 - a - 1,6a^2 - 3a - 3,a^2 - 3a - 2] $ & 0 & $ \ZZ_8 $ \\*
&&&$ [a,3a^2 + 1,a^2 - a - 1,6a^2 - 3a - 8,-5a^2 - 3a + 1] $ & 0 & $ \ZZ_2\times\ZZ_4 $ \\*
&&&$ [a,3a^2 + 1,a^2 - a - 1,a^2 - 3a - 8,-22a^2 + 3a + 7] $ & 0 & $ \ZZ_2\times\ZZ_2 $ \\*
&&&$ [a,3a^2 + 1,a^2 - a - 1,-44a^2 - 83a - 3,-615a^2 - 105a + 294] $ & 0 & $ \ZZ_2 $ \\*
&&&$ [a,3a^2 + 1,a^2 - a - 1,-34a^2 + 77a - 13,-177a^2 + 235a - 156] $ & 0 & $ \ZZ_2 $ \\
$ 214a $ & 593 & $ 8a^2-a-9 $  & $ [a^2 + 1,a^2 + 2a + 2,-1,6a^2 + a - 1,2a^2 - a - 2] $ & 0 & $ \ZZ_6 $ \\*
&&&$ [a^2 + 1,a^2 + 2a + 2,-1,-4a^2 + a + 4,-a - 1] $ & 0 & $ \ZZ_6 $ \\*
&&&$ [a^2 + 1,a^2 + 2a + 2,-1,26a^2 + 6a - 11,20a^2 - 42a - 46] $ & 0 & $ \ZZ_2 $ \\*
&&&$ [a^2 + 1,a^2 + 2a + 2,-1,41a^2 - 9a - 31,121a^2 - 101a - 148] $ & 0 & $ \ZZ_2 $ \\
$ 217a $ & 595 & $ 11a^2-4a-6 $  & $ [a^2 + a,a^2 + 2a + 5,a^2,143a^2 - 81a - 146,1071a^2 - 672a - 1117] $ & 0 & $ \ZZ_8 $ \\*
&&&$ [a^2 + a,a^2 + 2a + 5,a^2,8a^2 + 4a + 4,9a^2 + a - 2] $ & 0 & $ \ZZ_8 $ \\*
&&&$ [a^2 + a,a^2 + 2a + 5,a^2,18a^2 - a - 6,35a^2 - 25a - 37] $ & 0 & $ \ZZ_2\times\ZZ_8 $ \\*
&&&$ [a^2 + a,a^2 + 2a + 5,a^2,-12177a^2 + 20079a + 21134,-1762061a^2 - 394058a + 693731] $ & 0 & $ \ZZ_2 $ \\*
&&&$ [a^2 + a,a^2 + 2a + 5,a^2,53a^2 - a - 26,3a^2 - 102a - 77] $ & 0 & $ \ZZ_2\times\ZZ_8 $ \\*
&&&$ [a^2 + a,a^2 + 2a + 5,a^2,708a^2 - 66a - 446,-5205a^2 - 3555a + 288] $ & 0 & $ \ZZ_8 $ \\*
&&&$ [a^2 + a,a^2 + 2a + 5,a^2,-42a^2 + 64a + 74,-677a^2 + 103a + 458] $ & 0 & $ \ZZ_2\times\ZZ_4 $ \\*
&&&$ [a^2 + a,a^2 + 2a + 5,a^2,-752a^2 + 1259a + 1324,-27195a^2 - 3875a + 12311] $ & 0 & $ \ZZ_2\times\ZZ_2 $ \\*
&&&$ [a^2 + a,a^2 + 2a + 5,a^2,-852a^2 - 91a + 424,-12619a^2 + 6941a + 12425] $ & 0 & $ \ZZ_4 $ \\*
&&&$ [a^2 + a,a^2 + 2a + 5,a^2,-687a^2 + 1559a + 1514,-10301a^2 - 7264a + 123] $ & 0 & $ \ZZ_2 $ \\
$ 233a $ & 625 & $ 8a^2+3a+1 $  & $ [a^2 + a + 1,a^2 + 4a + 3,3a^2 + a - 1,14a^2 + 5a - 4,14a^2 - 8a - 14] $ & 0 & $ \ZZ_5 $ \\*
&&&$ [a^2,3a^2 + 2a + 2,a^2 - a,5a^2 + 2a - 2,a^2 - 1] $ & 0 & $ \ZZ_5 $ \\*
&&&$ [a^2,3a^2 + 2a + 2,a^2 - a,-825a^2 - 158a + 353,-12899a^2 + 5980a + 11869] $ & 0 & $ 0 $ \\*
&&&$ [a^2 + a + 1,a^2 + 4a + 3,3a^2 + a - 1,-36a^2 + 55a + 61,184a^2 + 287a + 111] $ & 0 & $ 0 $ \\
$ 243a $ & 649 & $ 8a^2+a-8 $  & $ [a^2 + a + 1,2a + 3,3a^2 - 2,3a^2 + 7a + 2,7a^2 + 2a - 3] $ & 0 & $ \ZZ_6 $ \\*
&&&$ [a^2 + a + 1,2a + 3,3a^2 - 2,8a^2 + 2a + 7,16a^2 - 5a - 2] $ & 0 & $ \ZZ_6 $ \\*
&&&$ [a^2 + a + 1,2a + 3,3a^2 - 2,-32a^2 + 62a - 38,-142a^2 + 257a - 191] $ & 0 & $ \ZZ_2 $ \\*
&&&$ [a^2 + a + 1,2a + 3,3a^2 - 2,-27a^2 + 57a - 38,-153a^2 + 276a - 221] $ & 0 & $ \ZZ_2 $ \\
$ 247a $ & 665 & $ 9a^2+a-8 $  & $ [1,2a + 3,-a^2 - a,8a + 1,a^2 + 2a + 2] $ & 0 & $ \ZZ_6 $ \\*
&&&$ [1,2a + 3,-a^2 - a,-25a^2 + 53a - 34,87a^2 - 147a + 112] $ & 0 & $ \ZZ_2\times\ZZ_6 $ \\*
&&&$ [1,2a + 3,-a^2 - a,-440a^2 + 783a - 584,6190a^2 - 10854a + 8194] $ & 0 & $ \ZZ_6 $ \\*
&&&$ [1,2a + 3,-a^2 - a,-10a^2 + 43a - 44,148a^2 - 196a + 90] $ & 0 & $ \ZZ_6 $ \\*
&&&$ [1,2a + 3,-a^2 - a,-4005a^2 + 7013a - 5269,-176412a^2 + 309544a - 233665] $ & 0 & $ \ZZ_2 $ \\*
&&&$ [1,2a + 3,-a^2 - a,-50a^2 + 93a - 59,-269a^2 + 477a - 347] $ & 0 & $ \ZZ_6 $ \\*
&&&$ [1,2a + 3,-a^2 - a,-8190a^2 + 8408a - 1889,-159767a^2 + 214454a - 309664] $ & 0 & $ \ZZ_2 $ \\*
&&&$ [1,2a + 3,-a^2 - a,-4250a^2 + 7093a - 5069,-176459a^2 + 308287a - 234587] $ & 0 & $ \ZZ_2\times\ZZ_2 $ \\*
&&&$ [1,2a + 3,-a^2 - a,-60a^2 + 108a - 69,-189a^2 + 332a - 238] $ & 0 & $ \ZZ_2\times\ZZ_6 $ \\*
&&&$ [1,2a + 3,-a^2 - a,-495a^2 + 803a - 544,6075a^2 - 10829a + 8078] $ & 0 & $ \ZZ_6 $ \\*
&&&$ [1,2a + 3,-a^2 - a,215a^2 - 347a + 246,-973a^2 + 1753a - 1378] $ & 0 & $ \ZZ_6 $ \\*
&&&$ [1,2a + 3,-a^2 - a,-4230a^2 + 7058a - 5049,-178139a^2 + 311112a - 236758] $ & 0 & $ \ZZ_2 $ \\
$ 254a $ & 685 & $ -7a^2+5a-7 $  & $ [a^2 + 1,a^2 + a,-a^2 - a,179a^2 - 96a - 169,1188a^2 - 457a - 1022] $ & 0 & $ \ZZ_4 $ \\*
&&&$ [a^2 + 1,a^2 + a,-a^2 - a,4a^2 - a - 4,-a^2 - a] $ & 0 & $ \ZZ_8 $ \\*
&&&$ [a^2 + 1,a^2 + a,-a^2 - a,14a^2 - 6a - 14,16a^2 - 14a - 20] $ & 0 & $ \ZZ_2\times\ZZ_4 $ \\*
&&&$ [a^2 + 1,a^2 + a,-a^2 - a,-2196a^2 + 2034a - 1514,-20218a^2 + 56780a - 54792] $ & 0 & $ \ZZ_2 $ \\*
&&&$ [a^2 + 1,a^2 + a,-a^2 - a,9a^2 + 4a - 19,12a^2 - 3a - 38] $ & 0 & $ \ZZ_2\times\ZZ_4 $ \\*
&&&$ [a^2 + 1,a^2 + a,-a^2 - a,-136a^2 + 129a - 94,-357a^2 + 953a - 812] $ & 0 & $ \ZZ_2\times\ZZ_2 $ \\*
&&&$ [a^2 + 1,a^2 + a,-a^2 - a,74a^2 + 39a - 24,65a^2 - 335a - 336] $ & 0 & $ \ZZ_4 $ \\*
&&&$ [a^2 + 1,a^2 + a,-a^2 - a,-396a^2 + 224a + 126,-1212a^2 + 3150a + 1332] $ & 0 & $ \ZZ_2 $ \\
$ 265a $ & 712 & $ 6a^2-10a-8 $  & $ [a + 1,7a^2 + 3a + 5,4a^2 + 2a + 2,60a^2 - 2a - 24,104a^2 - 60a - 100] $ & 0 & $ \ZZ_6 $ \\*
&&&$ [a + 1,7a^2 + 3a + 5,4a^2 + 2a + 2,40a^2 + 18a - 44,72a^2 - 84a - 72] $ & 0 & $ \ZZ_6 $ \\*
&&&$ [a + 1,7a^2 + 3a + 5,4a^2 + 4a + 2,14a^2 - 8a - 4,-28a^2 - 4a + 12] $ & 0 & $ \ZZ_2 $ \\*
&&&$ [a + 1,7a^2 + 3a + 5,4a^2 + 4a + 2,-626a^2 - 8a + 316,3492a^2 - 2564a - 3892] $ & 0 & $ \ZZ_2 $ \\
$ 266a $ & 719 & $ a^2-a-9 $  & $ [a^2 + a + 1,4a + 3,2a^2 + a - 2,11a^2 + 8a,17a^2 - a - 11] $ & 0 & $ \ZZ_6 $ \\*
&&&$ [a^2 + a + 1,4a + 3,2a^2 + a - 2,6a^2 + 13a,14a^2 + a - 1] $ & 0 & $ \ZZ_6 $ \\*
&&&$ [a^2 + a + 1,4a + 3,2a^2 + a - 2,31a^2 - 7a - 20,84a^2 - 74a - 104] $ & 0 & $ \ZZ_2 $ \\*
&&&$ [a^2 + a + 1,4a + 3,2a^2 + a - 2,26a^2 - 2a - 25,82a^2 - 68a - 97] $ & 0 & $ \ZZ_2 $ \\
$ 268a $ & 719 & $ 11a^2-4a-5 $  & $ [a^2 + 1,2a^2 + 2a + 2,-a,12a^2 + a - 5,7a^2 - 7a - 9] $ & 1 & $ 0 $ \\*
&&&$ [a^2 + 1,2a^2 + 2a + 2,-a,12a^2 + a - 5,7a^2 - 7a - 9] $ & 1 & $ 0 $ \\
$ 269a $ & 721 & $ 8a^2-9 $  & $ [a^2 + a + 1,4a + 3,3a^2 + a - 1,8a^2 + 9a,14a^2 - 7] $ & 0 & $ \ZZ_6 $ \\*
&&&$ [a^2 + a + 1,4a + 3,3a^2 + a - 1,13a^2 + 4a,17a^2 - 9a - 10] $ & 0 & $ \ZZ_6 $ \\*
&&&$ [a^2 + a + 1,4a + 3,3a^2 + a - 1,-12a^2 + 34a - 20,-55a^2 + 83a - 55] $ & 0 & $ \ZZ_6 $ \\*
&&&$ [a^2 + a + 1,4a + 3,3a^2 + a - 1,-1322a^2 + 2339a - 1735,-33252a^2 + 58345a - 44010] $ & 0 & $ \ZZ_2 $ \\*
&&&$ [a^2 + a + 1,4a + 3,3a^2 + a - 1,-7a^2 + 29a - 15,-74a^2 + 119a - 83] $ & 0 & $ \ZZ_6 $ \\*
&&&$ [a^2 + a + 1,4a + 3,3a^2 + a - 1,-1487a^2 + 2539a - 1490,-35052a^2 + 58054a - 43204] $ & 0 & $ \ZZ_2 $ \\
$ 270a $ & 727 & $ 10a^2-7a-7 $  & $ [a^2 + a,a^2 + 2a + 4,-1,9a^2 + 4a - 1,8a^2 - a - 5] $ & 1 & $ 0 $ \\*
&&&$ [a^2 + a,a^2 + 2a + 4,-1,9a^2 + 4a - 1,8a^2 - a - 5] $ & 1 & $ 0 $ \\
$ 283a $ & 773 & $ -3a^2+12a-5 $  & $ [1,a + 3,1,2a^2 + a,4a^2 - 2a - 5] $ & 0 & $ \ZZ_3 $ \\*
&&&$ [1,a + 3,1,-33a^2 + 51a - 40,-138a^2 + 250a - 193] $ & 0 & $ 0 $ \\
$ 290a $ & 805 & $ 3a^2-6a-10 $  & $ [a^2,2a^2 + 3a + 3,-1,12a^2 + 5a - 1,12a^2 + a - 6] $ & 0 & $ \ZZ_8 $ \\*
&&&$ [a^2,2a^2 + 3a + 3,-1,-5278a^2 + 9825a - 7216,-281397a^2 + 497785a - 377298] $ & 0 & $ \ZZ_2 $ \\*
&&&$ [a^2,2a^2 + 3a + 3,-1,12a^2 + 5a - 6,3a^2 - 3a - 10] $ & 0 & $ \ZZ_2\times\ZZ_8 $ \\*
&&&$ [a^2,2a^2 + 3a + 3,-1,-33a^2 + 10a - 41,-326a^2 + 68a - 28] $ & 0 & $ \ZZ_2\times\ZZ_4 $ \\*
&&&$ [a^2,2a^2 + 3a + 3,-1,-388a^2 + 555a - 461,-4638a^2 + 8031a - 6106] $ & 0 & $ \ZZ_2\times\ZZ_2 $ \\*
&&&$ [a^2,2a^2 + 3a + 3,-1,-398a^2 - 455a - 181,-10630a^2 - 3651a + 3018] $ & 0 & $ \ZZ_4 $ \\*
&&&$ [a^2,2a^2 + 3a + 3,-1,57a^2 - 51,-64a^2 - 110a - 28] $ & 0 & $ \ZZ_8 $ \\*
&&&$ [a^2,2a^2 + 3a + 3,-1,-1178a^2 + 5a - 426,11693a^2 + 9629a - 14206] $ & 0 & $ \ZZ_2 $ \\
$ 291a $ & 808 & $ -6a^2-2a-4 $  & $ [a,6a^2 + a + 4,2a^2 - 2,28a^2 - 6a - 6,24a^2 - 16a - 16] $ & 0 & $ \ZZ_5 $ \\*
&&&$ [a,6a^2 + a + 4,2a^2 - 2,-52a^2 + 164a - 286,-1306a^2 + 1992a - 2502] $ & 0 & $ 0 $ \\
$ 294a $ & 809 & $ 9a^2-9a-1 $  & $ [a,4a^2 + a + 2,a^2 - 1,15a^2 - 5a - 8,7a^2 - 9a - 9] $ & 0 & $ \ZZ_5 $ \\*
&&&$ [a,4a^2 + a + 2,a^2 - 1,120a^2 + 15a - 53,-83a^2 - 503a - 331] $ & 0 & $ 0 $ \\
$ 294b $ & 809 & $ 9a^2-9a-1 $  & $ [a,3a^2 + a + 2,0,10a^2 + 2a - 1,19a^2 - 3a - 12] $ & 0 & $ \ZZ_6 $ \\*
&&&$ [a,3a^2 + a + 2,0,25a^2 - 13a - 21,47a^2 - 40a - 56] $ & 0 & $ \ZZ_6 $ \\*
&&&$ [a,3a^2 + a + 2,0,-110a^2 + 97a + 139,-454a^2 - 295a + 37] $ & 0 & $ \ZZ_2 $ \\*
&&&$ [a,3a^2 + a + 2,0,-130a^2 + 82a + 139,-843a^2 - 113a + 396] $ & 0 & $ \ZZ_2 $ \\
$ 297a $ & 817 & $ -a^2-7a-8 $  & $ [a,4a^2 + a,0,9a^2 - 6a - 9,-3a^2 - 5a - 2] $ & 1 & $ \ZZ_2 $ \\*
&&&$ [a,4a^2 + a,0,9a^2 - a - 9,4a^2 - 5a - 5] $ & 1 & $ \ZZ_2 $ \\
$ 305a $ & 829 & $ 6a^2-a-10 $  & $ [0,2,-a,1,0] $ & 1 & $ 0 $ \\*
&&&$ [0,2,-a,1,0] $ & 1 & $ 0 $ \\
$ 315a $ & 851 & $ -2a^2+10a+1 $  & $ [a^2,a + 3,a^2 - 1,a^2 + 3a + 2,a^2 + 2a - 1] $ & 0 & $ \ZZ_4 $ \\*
&&&$ [a^2,3a^2 + a + 3,-a,-138a^2 + 239a - 194,-1411a^2 + 2382a - 1788] $ & 0 & $ \ZZ_2 $ \\*
&&&$ [a^2,3a^2 + a + 3,-a,2a^2 + 14a - 14,-25a^2 + 49a - 44] $ & 0 & $ \ZZ_2\times\ZZ_2 $ \\*
&&&$ [a^2,3a^2 + a + 3,-a,-18a^2 + 29a + 6,-47a^2 + 48a - 24] $ & 0 & $ \ZZ_2 $ \\
$ 322a $ & 865 & $ 9a^2-9a-8 $  & $ [1,a^2 + 2,-a^2 - a + 1,2,-a^2 + 1] $ & 0 & $ \ZZ_6 $ \\*
&&&$ [1,a^2 + 2,-a^2 - a + 1,-35a^2 + 22,43a^2 - 49a - 61] $ & 0 & $ \ZZ_6 $ \\*
&&&$ [1,a^2 + 2,-a^2 - a + 1,-45a^2 - 40a - 3,-350a^2 - 64a + 151] $ & 0 & $ \ZZ_2 $ \\*
&&&$ [1,a^2 + 2,-a^2 - a + 1,-55a^2 - 35a + 7,-308a^2 - 124a + 82] $ & 0 & $ \ZZ_2 $ \\
$ 322b $ & 865 & $ 9a^2-9a-8 $  & $ [a,2a^2 + 2,a,a^2 + 4a - 4,-5a^2 + 11a - 10] $ & 0 & $ \ZZ_4 $ \\*
&&&$ [a,2a^2 + 2,-a,-768a^2 + 1354a - 1029,-15730a^2 + 27595a - 20831] $ & 0 & $ \ZZ_2 $ \\*
&&&$ [a,2a^2 + 2,-a,-43a^2 + 84a - 64,-283a^2 + 500a - 379] $ & 0 & $ \ZZ_2\times\ZZ_2 $ \\*
&&&$ [a,2a^2 + 2,-a,-38a^2 + 94a - 59,-288a^2 + 521a - 363] $ & 0 & $ \ZZ_4 $ \\
$ 325a $ & 875 & $ 5a^2+5a+5 $  & $ [a^2 + 1,2a^2 + 2a,-a,10a^2 - a - 7,5a^2 - 4a - 6] $ & 0 & $ \ZZ_8 $ \\*
&&&$ [a^2 + 1,2a^2 + 2a,-a,3340a^2 - 1201a - 2807,-90789a^2 - 988a + 50986] $ & 0 & $ \ZZ_4 $ \\*
&&&$ [a^2 + 1,2a^2 + 2a,-a,20a^2 - 6a - 17,-20a^2 - 19a - 3] $ & 0 & $ \ZZ_2\times\ZZ_8 $ \\*
&&&$ [a^2 + 1,2a^2 + 2a,-a,215a^2 - 76a - 182,-1389a^2 - 238a + 611] $ & 0 & $ \ZZ_2\times\ZZ_4 $ \\*
&&&$ [a^2 + 1,2a^2 + 2a,-a,-15a^2 - 16a - 12,-171a^2 - 40a + 75] $ & 0 & $ \ZZ_8 $ \\*
&&&$ [a^2 + 1,2a^2 + 2a,-a,210a^2 - 71a - 197,-1345a^2 - 304a + 492] $ & 0 & $ \ZZ_2\times\ZZ_2 $ \\*
&&&$ [a^2 + 1,2a^2 + 2a,-a,-340a^2 + 504a + 203,-8180a^2 + 3461a + 4497] $ & 0 & $ \ZZ_2 $ \\*
&&&$ [a^2 + 1,2a^2 + 2a,-a,680a^2 - 566a - 837,8966a^2 - 8093a - 11269] $ & 0 & $ \ZZ_2 $ \\
$ 325b $ & 875 & $ 5a^2+5a+5 $  & $ [a^2 + a,-a^2 + 3a + 3,a^2,-2a^2 + 5a + 5,-a^2 + 2a + 2] $ & 0 & $ \ZZ_6 $ \\*
&&&$ [a^2 + a,-a^2 + 3a + 3,a^2,-57a^2 + 10a + 40,28a^2 - 94a - 87] $ & 0 & $ \ZZ_6 $ \\*
&&&$ [a^2 + a,-a^2 + 3a + 3,a^2,73a^2 + 385a - 160,-4276a^2 + 11782a + 8122] $ & 0 & $ \ZZ_2 $ \\*
&&&$ [a^2 + a,-a^2 + 3a + 3,a^2,8a^2 - 40a - 35,-39a^2 - 216a - 141] $ & 0 & $ \ZZ_6 $ \\*
&&&$ [a^2 + a,-a^2 + 3a + 3,a^2,203a^2 - 270a - 345,-2047a^2 + 1971a + 2603] $ & 0 & $ \ZZ_2 $ \\*
&&&$ [a^2 + a,-a^2 + 3a + 3,a^2,-22a^2 - 10a,82a^2 - 365a - 327] $ & 0 & $ \ZZ_6 $ \\
$ 333a $ & 883 & $ -7a^2+4a-7 $  & $ [a^2 + a + 1,a^2 + 3a + 2,2a^2 + a - 2,10a^2 + 4a - 2,12a^2 - 2a - 9] $ & 1 & $ 0 $ \\*
&&&$ [a^2 + a + 1,a^2 + 3a + 2,2a^2 + a - 2,10a^2 + 4a - 2,12a^2 - 2a - 9] $ & 1 & $ 0 $ \\
$ 336a $ & 905 & $ -4a^2-7a+5 $  & $ [a,3a^2,-a,6a^2 - 3a - 5,-2a^2 - 4a - 2] $ & 0 & $ \ZZ_{10} $ \\*
&&&$ [a,3a^2,-a,11a^2 + 7a,a^2 + 11a + 8] $ & 0 & $ \ZZ_{10} $ \\*
&&&$ [a,3a^2,-a,-179a^2 - 253a - 115,-3512a^2 - 1301a + 947] $ & 0 & $ \ZZ_2 $ \\*
&&&$ [a,3a^2,-a,-174a^2 - 253a - 115,-3604a^2 - 1300a + 990] $ & 0 & $ \ZZ_2 $ \\
$ 338a $ & 911 & $ 11a^2-7a-7 $  & $ [1,a^2 + a + 4,-a^2 - a,266a^2 - 605a - 603,2756a^2 - 8935a - 8313] $ & 0 & $ \ZZ_2 $ \\*
&&&$ [1,a^2 + a + 4,-a^2 - a,6a^2 + 2,3a^2 - 5a - 3] $ & 0 & $ \ZZ_4 $ \\*
&&&$ [1,a^2 + a + 4,-a^2 - a,21a^2 - 35a - 33,43a^2 - 192a - 167] $ & 0 & $ \ZZ_2\times\ZZ_2 $ \\*
&&&$ [1,a^2 + a + 4,-a^2 - a,16a^2 - 25a - 23,70a^2 - 237a - 217] $ & 0 & $ \ZZ_2 $ \\
$ 351a $ & 952 & $ 10a^2+2a $  & $ [a^2 + 1,4a^2 + 6a + 5,8a^2 - 2,-1344666a^2 + 2359820a - 1781376,-1310904916a^2 + 2300477896a - 1736579460] $ & 0 & $ \ZZ_2 $ \\*
&&&$ [a^2 + 1,4a^2 + 6a + 5,4a^2 - 2,-22781142a^2 + 39978218a - 30178668,-75113763134a^2 + 131815465490a - 99504551060] $ & 0 & $ \ZZ_2 $ \\
$ 354a $ & 959 & $ a^2+4a-11 $  & $ [a + 1,4a^2 + a + 2,a^2,16a^2 - 3a - 9,8a^2 - 10a - 12] $ & 1 & $ \ZZ_2 $ \\*
&&&$ [a + 1,4a^2 + a + 2,a^2,21a^2 - 8a - 9,10a^2 - 22a - 11] $ & 1 & $ \ZZ_2 $ \\
$ 363a $ & 991 & $ 5a^2-4a-11 $  & $ [1,a + 3,-a + 1,-a^2 + 3a + 4,-3a^2 + 2a + 3] $ & 1 & $ \ZZ_3 $ \\*
&&&$ [1,a + 3,-a + 1,64a^2 - 7a - 41,-112a^2 - 81a + 2] $ & 1 & $ 0 $ \\
$ 375a $ & 1003 & $ -8a^2+10a+3 $  & $ [1,4,0,6,a^2 + a + 3] $ & 1 & $ 0 $ \\*
&&&$ [1,4,0,6,a^2 + a + 3] $ & 1 & $ 0 $ \\
$ 380a $ & 1033 & $ 12a^2-a-3 $  & $ [a,4a^2 + 2a + 2,-1,18a^2 - 2a - 11,9a^2 - 11a - 14] $ & 1 & $ \ZZ_2 $ \\*
&&&$ [a,4a^2 + 2a + 2,-1,13a^2 + 3a - 16,-9a^2 + 6a - 21] $ & 1 & $ \ZZ_2 $ \\
$ 383a $ & 1045 & $ 13a^2-5a-7 $  & $ [a^2 + a,a^2 + 2a + 3,a^2 + a,5a^2 + 2a + 1,4a^2 - 1] $ & 0 & $ \ZZ_{10} $ \\*
&&&$ [a^2 + a,a^2 + 2a + 3,a^2 + a,-10a^2 + 12a + 11,-a^2 + 42a + 32] $ & 0 & $ \ZZ_{10} $ \\*
&&&$ [a^2 + a,a^2 + 2a + 3,a^2 + a,-655a^2 + 502a - 539,-3116a^2 + 10706a - 8921] $ & 0 & $ \ZZ_2 $ \\*
&&&$ [a^2 + a,a^2 + 2a + 3,a^2 + a,-35a^2 + 27a - 39,-121a^2 + 158a - 147] $ & 0 & $ \ZZ_2 $ \\
$ 389a $ & 1064 & $ 6a^2+8a-2 $  & $ [a^2 + 1,4a^2 + 2a + 5,2a^2,26a^2 - 2a - 4,22a^2 - 16a - 16] $ & 0 & $ \ZZ_9 $ \\*
&&&$ [a^2 + 1,4a^2 + 2a + 5,4a^2,54a^2 + 14a - 8,172a^2 - 116a - 176] $ & 0 & $ \ZZ_9 $ \\*
&&&$ [a^2 + 1,4a^2 + 2a + 5,2a^2,-324a^2 + 268a + 396,-2248a^2 - 1372a + 236] $ & 0 & $ \ZZ_3 $ \\*
&&&$ [a^2 + 1,4a^2 + 2a + 5,4a^2,534a^2 - 1276a - 1758,-13880a^2 - 24544a - 16012] $ & 0 & $ 0 $ \\
$ 394a $ & 1080 & $ 6a^2-6a-12 $  & $ [a^2 + a + 1,6a^2 + 2a + 7,6a^2 + 2a - 2,56a^2 - 6a - 8,92a^2 - 58a - 66] $ & 0 & $ \ZZ_7 $ \\*
&&&$ [a^2 + a + 1,6a^2 + 2a + 7,6a^2 + 2a - 2,-184a^2 + 324a - 358,-3268a^2 + 5492a - 3856] $ & 0 & $ 0 $ \\
$ 399a $ & 1097 & $ -5a^2+8a-13 $  & $ [a,3a^2 + a + 2,-1,11a^2 - 2a - 5,4a^2 - 6a - 6] $ & 1 & $ 0 $ \\*
&&&$ [a,3a^2 + a + 2,-1,11a^2 - 2a - 5,4a^2 - 6a - 6] $ & 1 & $ 0 $ \\
$ 405a $ & 1111 & $ 5a^2-a-11 $  & $ [a,2a^2 + 2a,-a - 1,6a^2 - a - 5,-3a - 2] $ & 1 & $ 0 $ \\*
&&&$ [a,2a^2 + 2a,-a - 1,6a^2 - a - 5,-3a - 2] $ & 1 & $ 0 $ \\
$ 405b $ & 1111 & $ 5a^2-a-11 $  & $ [a + 1,3a^2 + a + 3,a^2 + a,-6252a^2 + 10261a - 7932,-336552a^2 + 572642a - 429068] $ & 0 & $ 0 $ \\*
&&&$ [a + 1,3a^2 + a + 3,a^2 + a,13a^2 + a - 2,13a^2 - 3a - 8] $ & 0 & $ \ZZ_5 $ \\*
&&&$ [a + 1,3a^2 + a + 3,a^2 + a,-7a^2 + 6a - 17,-2a^2 + 65a - 54] $ & 0 & $ \ZZ_5 $ \\
$ 406a $ & 1111 & $ a^2-3a-10 $  & $ [a^2 + a,-a^2 + a + 4,-1,-3020a^2 + 8537a - 2126,-20301a^2 + 191896a - 254758] $ & 0 & $ 0 $ \\*
&&&$ [a^2 + a,-a^2 + a + 4,-1,2a + 4,-a^2 + a + 2] $ & 0 & $ \ZZ_5 $ \\*
&&&$ [a^2 + a,-a^2 + a + 4,-1,-5a^2 + 7a - 6,14a^2 + 8a - 42] $ & 0 & $ \ZZ_5 $ \\
$ 421a $ & 1133 & $ 12a^2-7a-7 $  & $ [a^2,4a^2 + 3a + 1,-1,18a^2 - 3a - 15,6a^2 - 14a - 13] $ & 1 & $ \ZZ_3 $ \\*
&&&$ [a^2,4a^2 + 3a + 1,-1,18a^2 - 13a,a^2 + 6a + 9] $ & 1 & $ 0 $ \\
$ 425a $ & 1151 & $ -9a^2+5a-6 $  & $ [a^2,3a^2 + 3a + 1,a^2 - a - 1,14a^2 - 4a - 11,-a^2 - 10a - 7] $ & 1 & $ 0 $ \\*
&&&$ [a^2,3a^2 + 3a + 1,a^2 - a - 1,14a^2 - 4a - 11,-a^2 - 10a - 7] $ & 1 & $ 0 $ \\
$ 426a $ & 1151 & $ -6a^2-5a-5 $  & $ [a,3a^2 + a + 1,a^2 - a - 1,9a^2 - 2a - 6,2a^2 - 5a - 6] $ & 0 & $ \ZZ_3 $ \\*
&&&$ [a,3a^2 + a + 1,a^2 - a - 1,-16a^2 + 48a - 51,-127a^2 + 225a - 199] $ & 0 & $ 0 $ \\
$ 435a $ & 1169 & $ -5a^2+a-8 $  & $ [1,a^2 + a + 4,-a^2 + 1,-119a^2 - 101a - 130,-1672a^2 - 892a - 184] $ & 0 & $ \ZZ_2 $ \\*
&&&$ [1,a^2 + a + 4,-a^2 + 1,-14a^2 + 34a - 20,32a^2 - 51a + 40] $ & 0 & $ \ZZ_4 $ \\*
&&&$ [1,a^2 + a + 4,-a^2 + 1,-19a^2 + 24a - 25,-6a^2 - 78a + 39] $ & 0 & $ \ZZ_2\times\ZZ_2 $ \\*
&&&$ [1,a^2 + a + 4,-a^2 + 1,a^2 - 11a,128a^2 - 312a + 218] $ & 0 & $ \ZZ_2 $ \\
$ 435b $ & 1169 & $ -5a^2+a-8 $  & $ [a,2a^2 + a + 1,a^2,-a^2 + 14a - 7,-11a^2 + 26a - 16] $ & 0 & $ \ZZ_2 $ \\*
&&&$ [a,2a^2 + a + 1,a^2,4a^2 + 84a + 43,-317a^2 + 211a + 298] $ & 0 & $ \ZZ_2 $ \\

\end{longtable}
\end{center}

\iflandscapetable
\end{landscape}
\fi

\end{document}